\newtheorem{theorem}{Theorem}
\newtheorem{definition}{Definition}
\newtheorem{remark}{Remark}
\newcommand{\tr}{{\mathrm{trace}}}
\newcommand{\differential}{{\rm{d}}}
\newcommand{\blue}{\color{black}}
\title{\LARGE \bf
Neural Schr\"{o}dinger Bridge with Sinkhorn Losses: Application to Data-driven Minimum Effort Control of  Colloidal Self-assembly
}
\author{Iman Nodozi,~\IEEEmembership{Student Member,~IEEE}, Charlie Yan, Mira Khare, Abhishek Halder,~\IEEEmembership{Senior Member,~IEEE}, Ali Mesbah,~\IEEEmembership{Senior Member,~IEEE}
\thanks{Iman Nodozi and Charlie Yan are with the Department of Electrical and Computer Engineering, University of California, Santa Cruz, CA 95064, USA, {\texttt{\{inodozi,cyan140\}@ucsc.edu}}.}
\thanks{Abhishek Halder (corresponding author) is with the Department of Aerospace Engineering, Iowa State University, Ames, IA 50011, USA, {\texttt{ahalder@iastate.edu}}.} 
\thanks{Mira Khare and Ali Mesbah are with the Department of Chemical and Biomolecular Engineering, University of California, Berkeley, CA 94720, USA, {\tt\small{\{mira\_khare,mesbah\}@berkeley.edu}}.}        
\thanks{This research was supported by NSF awards 2112754 and 2112755.}
}
\begin{document}
\bstctlcite{IEEE_b:BSTcontrol}
\maketitle
\thispagestyle{plain}
\pagestyle{plain}

\begin{abstract}
We show that the minimum effort control of colloidal self-assembly can be naturally formulated in the order-parameter space as a generalized Schr\"{o}dinger bridge problem -- a class of fixed-horizon stochastic optimal control problems that originated in the works of Erwin Schr\"{o}dinger in the early 1930s. In recent years, this class of problems has seen a resurgence of research activities in the control and machine learning communities. Different from the existing literature on the theory and computation for such problems, the controlled drift and diffusion coefficients for colloidal self-assembly are typically nonaffine in control, and are difficult to obtain from physics-based modeling. We deduce the conditions of optimality for such generalized problems, and show that the resulting system of equations is structurally very different from the existing results in a way that standard computational approaches no longer apply. Thus motivated, we propose a data-driven learning and control framework, named `neural Schr\"{o}dinger bridge', to solve such generalized Schr\"{o}dinger bridge problems by innovating on recent advances in neural networks. We illustrate the effectiveness of the proposed framework using a numerical case study of colloidal self-assembly. We learn the controlled drift and diffusion coefficients as two neural networks using molecular dynamics simulation data, and then use these two to train a third network with Sinkhorn losses designed for distributional endpoint constraints, specific for this class of control problems.   
\end{abstract}


\noindent{\bf Keywords: Schr\"{o}dinger bridge, Sinkhorn loss, stochastic optimal control, physics-informed neural networks, colloidal self-assembly.}

\section{Introduction}\label{SecIntro}
Motivated by feedback control of colloidal self-assembly (SA), this work focuses on learning the solution of the nonlinear stochastic optimal control problems over a given fixed time horizon $[0, T]$ of the form
\begin{subequations}
\begin{align}
\underset{\bm{u}\in\mathcal{U}}{\inf}\:~~~~~~&\mathbb{E}_{\mu^{\bm{u}}}\left[\int_{0}^{T}\frac{1}{2}\|\bm{u}(t,\bm{x})\|_{2}^{2}\:\differential t\right]\label{genSBPobj}\\
\text{subject to} ~~~&\differential \bm{x} =\bm{f}(t,\bm{x},\bm{u}) \differential t + \sqrt{2}\: \bm{g}(t,\bm{x},\bm{u}) \differential \bm{w},\label{GenSBPdyn}\\
 & \bm{x}(t=0)\sim \mu_{0}\;\text{(given)}, \; \bm{x}(t=T)\sim \mu_{T}\;\text{(given)},\label{genSBPconstr}    \end{align}
\label{GeneralizedSBP}
\end{subequations}
where $\mu_{0},\mu_{T}$ denote the prescribed probability measures over the state space $\mathcal{X}\subseteq\mathbb{R}^{n}$ at $t=0$ and $t=T$, respectively. The constraint in \eqref{GenSBPdyn} is a controlled It\^{o} stochastic differential equation (SDE) with the state vector $\bm{x}\in \mathcal{X}$, the control vector $\bm{u}\in\mathbb{R}^{m}$, and the standard Wiener process $\bm{w}\in\mathbb{R}^{p}$. For the solution to the SDE (1b) to be 
for colloidal SA systems, the state vector $\bm{x}$ represents suitable order parameters. The \emph{drift coefficient} $\bm{f}$ is a vector field given by mapping $\bm{f}:[0,T]\times\mathcal{X}\times\mathcal{U} \mapsto \mathbb{R}^{n}$, and the \emph{diffusion coefficient} $\bm{g}$ is a matrix field given by mapping $\bm{g}:[0,T]\times\mathcal{X}\times\mathcal{U} \mapsto \mathbb{R}^{n\times p}$. For the SDE solutions to be well-posed, we will detail suitable smoothness assumptions on $\bm{f}$ and $\bm{g}$. 

Associated with the diffusion coefficient $\bm{g}$, is a \emph{diffusion tensor} $\bm{G}:=\bm{g}\bm{g}^{\top}\in\mathbb{S}^{n}_{+}$, which being an outer product, is a symmetric positive semidefinite matrix field $\bm{G}:[0,T]\times\mathcal{X}\times\mathcal{U} \mapsto\mathbb{S}^{n}_{+}$. In \eqref{genSBPobj}, we suppose that the set of admissible controls $\mathcal{U}$ comprises of finite energy Markovian inputs within a prescribed time horizon, i.e., 
\begin{align}
  \mathcal{U}:=\left\{\bm{u}: [0,T] \times \mathcal{X}\mapsto \mathbb{R}^{m} \mid\langle\bm{u}, \bm{u}\rangle<\infty\right\},\label{feasibleU} 
\end{align}
where $\langle\cdot, \cdot\rangle$ denotes the standard Euclidean inner product. The symbol $\mathbb{E}_{\mu^{\bm{u}}}\left[\cdot\right]$ in \eqref{genSBPobj} denotes the mathematical expectation w.r.t. the controlled state probability measure $\mu^{\bm{u}}$, that is, $\mathbb{E}_{\mu^{\bm{u}}}\left[\cdot\right] := \int (\cdot)\:\differential\mu^{\bm{u}}$. The superscript $\bm{u}$ in $\mu^{\bm{u}}$ indicates that the joint measure depends on the choice of control $\bm{u}$. Thus, the objective in \eqref{genSBPobj} is to minimize the control effort in steering the state statistics from $\mu_0$ to $\mu_T$ under a prespecified time horizon and controlled stochastic dynamics constraints, over all admissible control policies $\bm{u}(t,\bm{x})$ in $\mathcal{U}$.

In feedback control of colloidal SA systems, the objective generally is to design control policies that steer the system from an initial disordered stochastic state to a desired terminal ordered crystalline stochastic state \cite{PAULSON201538,tang2022control}. These stochastic states are naturally encoded in terms of suitable order parameters. As such,  formulation \eqref{GeneralizedSBP} is particularly appealing in this context because it allows for directly shaping the multivariate distribution of order parameters via optimal control synthesis. The drift and diffusion coefficients $\bm{f},\bm{g}$ in equation \eqref{GenSBPdyn} allow for the representation of the free energy landscape, which is crucial for circumventing kinetic traps or local minima when directing the system towards a desired end state, typically a global minimum within the solution space.

However, in practice, the drift and diffusion coefficients are difficult to model from first principles. This is because accurately capturing the interplay between various forces and interactions, such as van der Waals forces, electrostatic interactions, and solvent-mediated interactions, is challenging. As a result, empirical or semi-empirical approaches \cite{tang2013colloidal}, as well as coarse-grained or phenomenological models \cite{tang2016optimal}, are often employed to approximate these coefficients based on either experimental data or molecular dynamics (MD) simulation data.

Another modeling difficulty specific to colloidal SA is that both $\bm{f},\bm{g}$ are typically nonlinear in state $\bm{x}$, as well as non-affine in control $\bm{u}$. Furthermore, $\bm{f}$ and $\bm{g}$ both have explicit time dependence in practice. To circumvent these modeling issues, in this work, we propose a learning and control framework where $\bm{f}$ and $\bm{g}$ are learnt from high-fidelity MD simulation data as the outputs of neural network (NN) representations $\mathcal{N}_{\text {Drift}}$ and $\mathcal{N}_{\text {Diffusion}}$, respectively. With these learnt representations for $\bm{f}$ and $\bm{g}$, we propose a computational framework--based on another neural network--to numerically solve \eqref{GeneralizedSBP} for control synthesis.

\subsection{Relation to the Schr\"{o}dinger Bridge Problem (SBP)}\label{subsecRelSBP}
We refer to \eqref{GeneralizedSBP} as a \emph{generalized Schr\"{o}dinger Bridge Problem (GSBP)} since it is related to distributional two-point boundary value problems originating in two papers of Erwin Schr\"{o}dinger in 1931-32 \cite{schrodinger1931umkehrung,schrodinger1932theorie}. The qualifier ``generalized" points to the presence of prior dynamics given by the controlled drift-diffusion coefficient pair $\left(\bm{f},\bm{g}\right)$, which generalizes the setting considered in Schr\"{o}dinger's original investigations \cite{schrodinger1931umkehrung,schrodinger1932theorie}. In the special case $\bm{f}\equiv \bm{u}$, $\bm{g}\equiv \bm{I}_{n}$, formulation \eqref{GeneralizedSBP} reduces to the classical SBP \`{a} la Schr\"{o}dinger. From this perspective, classical SBP is the problem of minimum effort distribution steering with zero prior drift, i.e., the problem of controlling Brownian motion with endpoint distribution constraints. 

A different way to interpret classical SBP is to view it as a stochastic dynamic version of the optimal mass transport (OMT) problem. The dynamic OMT \cite{benamou2000computational} is a special case of \eqref{GeneralizedSBP} with $\bm{f}\equiv \bm{u}$, $\bm{g}\equiv \bm{0}$. For details on these connections from a stochastic control perspective, we refer the readers to \cite{chen2016relation}. In recent years, SBPs and their generalizations have come to prominence in both control \cite{chen2016relation,caluya2021wasserstein,caluya2021reflected,chen2021stochastic} and machine learning \cite{de2021diffusion,wang2021deep,chen2021likelihood,croitoru2023diffusion} communities. In particular, a data-driven maximum likelihood sampling solution of the classical SBP (i.e., with $\bm{f}\equiv \bm{u}$, $\bm{g}\equiv \bm{I}_{n}$) was proposed in \cite{pavon2021data}  assuming availability of the samples from the endpoint measures $\mu_{0},\mu_{T}$. Similar line of ideas were pursued in \cite{vargas2021solving,stromme2023sampling}.

While solution methods for the GSBP \eqref{GeneralizedSBP} in general are not available in the current literature, specialized algorithms for particular forms of $\bm{f}, \bm{g}$ have appeared. For instance, \cite{caluya2021wasserstein} considered the case when the drift coefficient $\bm{f}$ is control affine and the diffusion coefficient $\bm{g}$ is $C\left([0,T]\right)$ matrix that is independent of state and input, i.e., $$\bm{f}\left(t,\bm{x},\bm{u}\right) \equiv \widetilde{\bm{f}}\left(t,\bm{x}\right) + \bm{B}(t)\bm{u}, \:\bm{g}\left(t,\bm{x},\bm{u}\right) \equiv \bm{B}(t)\in\mathbb{R}^{n\times m}.$$
In this case, $m=p$ and the stochastic process noise enters through the input channels (e.g., modeling disturbance in forcing and/or actuation uncertainties). The results in \cite{caluya2021wasserstein} showed that if $\widetilde{\bm{f}}$ is gradient of a potential, or if $(\widetilde{\bm{f}},\bm{B}(t))$ is of mixed conservative-dissipative form, then certain proximal recursions can be designed to numerically solve the corresponding GSBP. In \cite{caluya2021reflected}, this result was extended for the case when additional (deterministic) state constraints are present. 

GSBPs with nonlinear drifts and full-state feedback linearizable structures were considered in \cite{caluya2020finite,haddad2020prediction}.  GSBP instances for both first- and second-order noisy nonuniform Kuramoto oscillator models were solved in \cite{nodozi2022schrodinger} using Feynman-Kac path integral techniques. Closest to the GSBP \eqref{GeneralizedSBP} is the work in \cite{nodozi2022physics}, which considered \emph{control non-affine} drift and diffusion coefficients and showed that the conditions of optimality involves additional coupled PDEs compared to the control-affine case. However, the developments in \cite{nodozi2022physics} were still model based. Data-driven solution of control non-affine GSBPs at the level of generality \eqref{GeneralizedSBP}, as pursued in this work, is novel w.r.t. the existing literature.

\subsection{Related Works on Control of Colloidal Self-Assembly}\label{SubsecRelatedWorks}

Feedback control has emerged as a promising approach to enhance the reproducibility of colloidal SA systems towards desired structures. Previous studies \cite{juarez2012feedback, gao2018feedback} demonstrated the effectiveness of proportional-integral control on simple test systems. However, applying such basic control approaches to complex colloidal SA systems with possible kinetically arrested dynamics may not yield satisfactory results.  Alternative approaches like model predictive control (MPC) or dynamic programming have been suggested. For instance, \cite{tang2013colloidal} presents an MPC approach based on energy landscapes estimated from MD simulations. However, this method can be computationally demanding for large-scale systems or systems with complex interactions, especially considering that the solution time for MPC might exceed the sampling time of SA, particularly for fast dynamics. This challenge becomes even more compounded as the size and complexity of the SA model grows.

On the other hand, \cite{tang2016optimal} utilizes a dynamic programming-based approach, which results in a lookup table of optimal actions for given states. Despite its theoretical elegance, dynamic programming suffers from the `curse of dimensionality', rendering it impractical for systems of higher complexity due to the exponential growth in computational resources required. For both these methods, the accuracy of the control relies heavily on the quality of the underlying model. To this end, model-free reinforcement learning can alleviate modeling challenges in optimal control of colloidal SA systems \cite{whitelam2020learning,o2023novelty}. Furthermore, recent advances in NNs have provided a promising alternative for modeling the hidden physics of stochastic dynamic systems without making assumptions about the final equations representing the physics of the system (e.g., \cite{dijkstra2021predictive,O_Leary_2022,mao2022deep,lizano2023convolutional}). We employ the NNs, $\mathcal{N}_{\text {Drift}}$ and $\mathcal{N}_{\text {Diffusion}}$, to represent the energy and diffusion landscapes of a colloidal SA control problem, critical to guiding the system to a desired final state. 
Our focus shifts to governing the temporal progression of the joint probability distribution encompassing the states involved in colloidal SA. Building upon previous work \cite{nodozi2022physics}, we incorporate data-driven 
models based on physics-informed neural networks and trained on high-fidelity MD simulation data, which allows for a more realistic 
representation of the colloidal SA process.



\begin{figure*}[t]
\centering
\includegraphics[width=.75\linewidth]{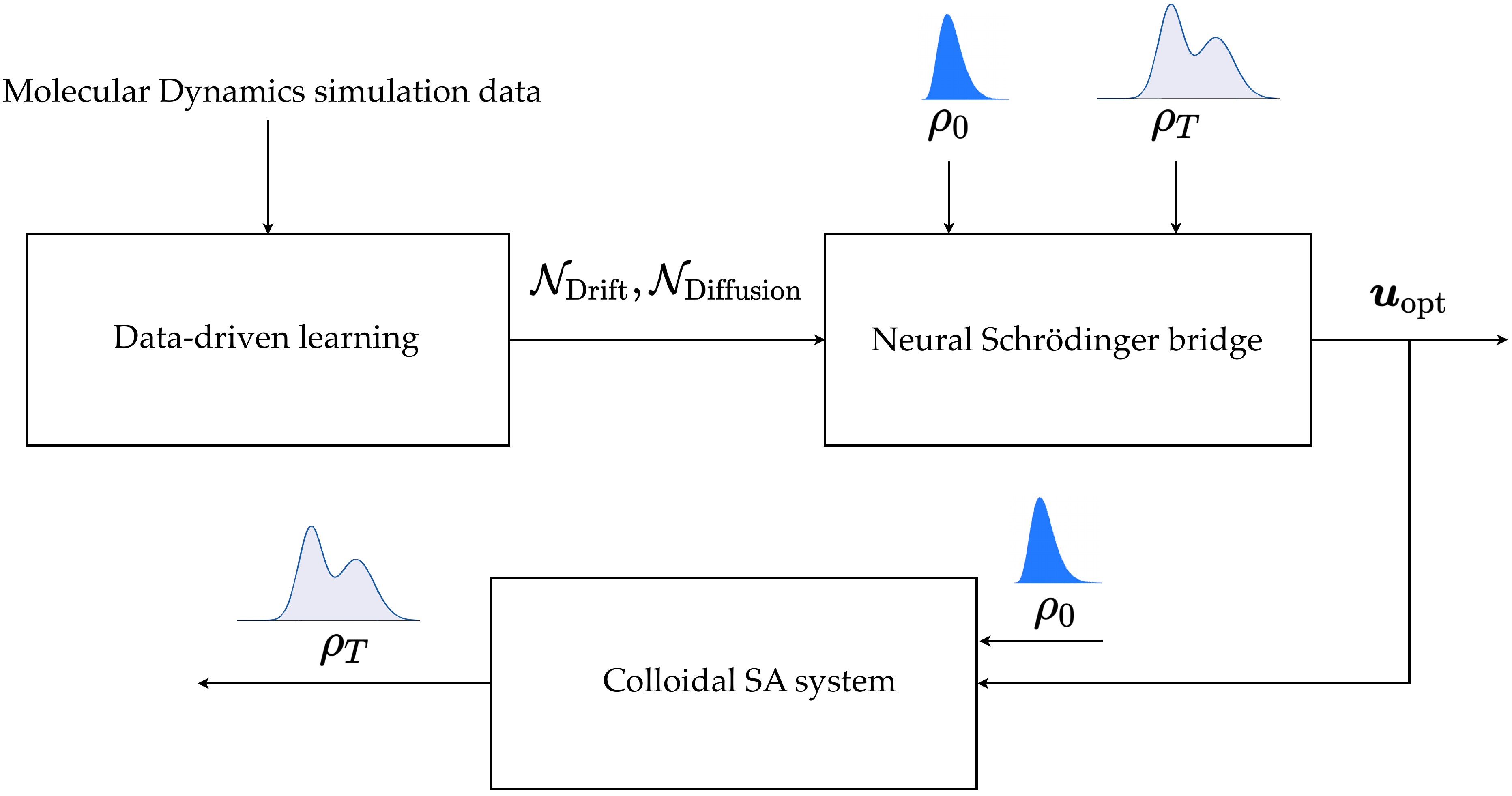}
\caption{{\small{An overview of the proposed learning and control framework for solving the generalized Schr\"{o}dinger Bridge Problem \eqref{GeneralizedSBP} for colloidal self-assembly. Here, $\rho_0$ and $\rho_{T}$ denote the probability density functions associated with the endpoint measures $\mu_0$ and $\mu_T$, respectively.}}}
\label{fig:overallTCST}
\end{figure*}


\subsection{Contributions}
This paper makes the following specific contributions.
\begin{itemize}
    \item Building on our prior work \cite{nodozi2022physics}, we propose that the controlled colloidal SA can be naturally formulated as a distribution steering problem in a suitable order-parameter space. This offers a newfound connection between the controlled colloidal SA and a non-standard stochastic optimal control problem with hard constraints on the endpoint state statistics. The resulting stochastic optimal control problem takes the form of a control non-affine GSBP. 

    \item To the best of the authors' knowledge, this is the first work to derive and numerically solve the conditions of optimality for control-non-affine GSBP in multi-dimensional state-space. As detailed in Sec. \ref{SecNeuralSBP}, the resulting system of equations is fundamentally different from the control-affine SBPs in that the optimal control is no longer an explicit functional of the (sub)gradient of the associated value function solving a Hamilton-Jacobi-Bellman (HJB) PDE. Instead, the $m$-dimensional optimal control $\bm{u}_{\rm{opt}}$ here solves a system of $m$ PDEs, which are coupled nonlinearly with two more PDEs and endpoint boundary conditions. As a result, existing approaches from the literature such as the Hopf-Cole transform \cite{hopf1950partial,cole1951quasi} followed by a contractive fixed point recursion \cite{chen2016entropic,caluya2021wasserstein}, or Feynman-Kac path integral techniques \cite{nodozi2022schrodinger}, cannot be used to numerically solve our system of equations. Leveraging recent advances in NNs, we propose a computational framework to learn the solution for this system of $m+2$ PDEs and boundary conditions.

    \item Our proposed computational approach, dubbed `neural Schr\"{o}dinger bridge', is `neural' in two ways: (i) a pair of NNs are trained to approximate the $\bm{f}$ and $\bm{g}$ in \eqref{GeneralizedSBP} using MD simulation data, (ii) the GSBP optimality conditions, derived as functions of these NN representations, are further solved via a physics-informed neural network (PINN) \cite{raissi2019physics,lu2021deepxde}. However, standard PINNs with mean squared error (MSE) losses are not appropriate to enforce distributional endpoint constraints \eqref{genSBPconstr}. To address this, we propose a PINN with Sinkhorn a.k.a. entropy-regularized Wasserstein losses for these constraints, and differentiate through these losses for training. The resulting architecture could be of independent interest.   
\end{itemize}

We clarify here that, from a methodological viewpoint, the proposed framework is different from two recent works \cite{koshizuka2022neural} and \cite{kim2023unpaired}, which also bring together SBPs and NNs. In \cite{koshizuka2022neural}, the main idea was to learn the \emph{uncontrolled} $\bm{f},\bm{g}$ as NNs, i.e., to learn an \emph{unforced} neural SDE using the population samples via SBP. The unforced SDE was learnt via a stochastic version of the principle of least action, i.e., by appealing to how SBP can be seen as a stochastic version of dynamic OMT, as we explained in Sec. \ref{subsecRelSBP}. The work in \cite{kim2023unpaired} proposed learning a classical SBP between unpaired images. Different from these works, our colloidal SA context requires learning the \emph{controlled} $\bm{f},\bm{g}$ as controlled neural SDEs before proceeding for optimal control synthesis -- the latter is an instance of GSBP, which is then solved via a new variant of PINN that we propose herein.


\subsection{Organization}
In Sec. \ref{SecNeuralSBP}, 
we define the GSBP \eqref{GeneralizedSBP} in terms of NN representations of $\bm{f}$ and $\bm{g}$. In Sec. \ref{secPINN}, we then discuss the solution of the GSBP conditions of optimality using a novel PINN with Sinkhorn losses. A detailed numerical case study of a colloidal SA system in an isothermal-isobaric (NPT) ensemble is then presented in Sec. \ref{SecNumerical}. Sec. \ref{SecConclusions} concludes the paper.


\section{Neural Schr\"{o}dinger Bridge}\label{SecNeuralSBP}
Our overall approach is to learn $(\bm{f},\bm{g})$ as fully connected feed-forward NN representations, denoted by $\mathcal{N}_{\text {Drift}}$ and $\mathcal{N}_{\text {Diffusion}}$, respectively. Both these NNs are designed to be functions of the current time $t\in[0,T]$, the system state $\bm{x}$, and the control input $\bm{u}$. These two networks are trained to predict the future states of the system based on the tuple $(t,\bm{x},\bm{u})$. Training of these networks using MD simulations is detailed in Sec. \ref{SecNumerical}. Fig. \ref{fig:overallTCST} gives an overview of the proposed learning and control framework. We next state the smoothness of the learnt NN representations required for the control problem (i.e., the GSBP) to be well-posed. 

\subsection{Smoothness of the Learnt $\bm{f}$ and $\bm{g}$}\label{subsecSmoothness}
We consider both $\mathcal{N}_{\text {Drift}}$ and $\mathcal{N}_{\text {Diffusion}}$ to have tangent hyperbolic, i.e., $\tanh(\cdot)$ activation functions. Tangent hyperbolic nonlinearities are known to be slope-restricted  \cite[Proposition 2]{fazlyab2020safety}. As a result, the output of a fully connected feed-forward NN with $\tanh$ activation remains component-wise slope-restricted. Consequently, $\bm{f},\bm{g}$ being the respective outputs of the networks $\mathcal{N}_{\text {Drift}},\mathcal{N}_{\text {Diffusion}}$, are guaranteed \cite[Theorem 2]{fazlyab2019efficient} to be Lipschitz continuous.

Motivated by the Lipschitz continuity of the outputs of $\mathcal{N}_{\text {Drift}},\mathcal{N}_{\text {Diffusion}}$ for an admissible Markovian policy $\bm{u}(t,\bm{x}) \in \mathcal{U}$, we assume that the coefficients $\bm{f}$ and $\bm{g}$ satisfy\\
\noindent\textbf{(A1) non-explosion and Lipschitz conditions:} there exist constants $c_1, c_2$ such that $$\|\bm{f}(t,\bm{x},\bm{u}(t,\bm{x}))\|_{2} + \|\bm{g}(t,\bm{x},\bm{u}(t,\bm{x}))\|_2 \leq c_1\left(1 + \|\bm{x}\|_2\right),$$ 
and that $$\|\bm{f}(t,\bm{x},\bm{u}(t,\bm{x})) - \bm{f}(t,\widetilde{\bm{x}},\bm{u}(t,\widetilde{\bm{x}}))\|_{2} \leq c_2\|\bm{x}-\widetilde{\bm{x}}\|_2$$ for all $\bm{x},\widetilde{\bm{x}}\in \mathcal{X}$, $t\in[0,T]$; \\
\noindent\textbf{(A2) uniformly lower bounded diffusion:}  there exists constant $c_3$ such that  the diffusion tensor $\bm{G}=\bm{g} \bm{g}^{\top}$ satisfies $$\langle \bm{x},\bm{G}\left(t,\bm{x},\bm{u}(t,\bm{x})\right)\bm{x}\rangle \geq c_3 \|\bm{x}\|_2^2$$ for all $t\in[0,T]$.\\
\noindent The assumption {\blue{\textbf{(A1)}}} guarantees \cite[p. 66]{oksendal2013stochastic} existence-uniqueness for the sample path of the SDE \eqref{GenSBPdyn}.
The assumptions {\blue{\textbf{(A1), (A2)}}} together guarantee \cite[Ch. 1]{friedman2008partial} that the generator associated with \eqref{GenSBPdyn} yields absolutely continuous probability measures $\mu^{\bm{u}}$ for all $t>0$ provided the prescribed initial probability  measure $\mu_0:=\mu^{\bm{u}}(t=0,\bm{x})$ is absolutely continuous. 

In this work, we assume that the given endpoint measures $\mu_0,\mu_T$ are absolutely continuous, i.e., $\mu_0=\rho_0(\bm{x})\differential\bm{x}$, $\mu_T = \rho_{T}(\bm{x})\differential\bm{x}$ where $\rho_0,\rho_{T}$ are the corresponding endpoint joint state probability density functions (PDFs). If the solution for \eqref{GeneralizedSBP} exists, then under the stated regularity assumptions on $\bm{f}$ and $\bm{g}$, the corresponding controlled measure $\mu^{\bm{u}}(t,\bm{x})$ will remain absolutely continuous with $\differential \mu^{\bm{u}}(t,\bm{x})=\rho^{\bm{u}}(t,\bm{x}) \differential \bm{x}$ for admissible $\bm{u}\in\mathcal{U}$. We next discuss reformulating \eqref{GeneralizedSBP} in terms of the controlled joint state PDF $\rho^{\bm{u}}$.

\subsection{PDF Steering Problem}\label{subsec:PDFsteeringproblem}
{\blue{To state the PDF steering problem, we set up some notations. We use the symbol nabla ($\nabla$) to denote the gradient w.r.t. its subscript vector. So for $\bm{x}\in\mathbb{R}^{n}$, we have 
\begin{align*}
&\text{gradient operator}\; &\nabla_{\bm{x}} &:=\begin{pmatrix}
\frac{\partial}{\partial x_1}\\
\vdots\\
\frac{\partial}{\partial x_n}
\end{pmatrix},\\
&\text{divergence operator}\;&\nabla_{\bm{x}}\cdot &:= \frac{\partial}{\partial x_1} + \hdots + \frac{\partial}{\partial x_n},\\
&\text{Laplacian operator}\; &\Delta_{\bm{x}} &:= \nabla_{\bm{x}}\cdot\nabla_{\bm{x}} = \frac{\partial^2}{\partial x_1^2} + \hdots + \frac{\partial^2}{\partial x_n^2}.
\end{align*}
For matrices $\bm{P},\bm{Q}$ with commensurate dimensions and respective $(i,j)$th entries $P_{ij},Q_{ij}$, their Frobenius a.k.a. Hibert-Schmidt inner product 
\begin{align}
\langle \bm{P},\bm{Q}\rangle := \tr\left(\bm{P}^{\top}\bm{Q}\right) = \sum_{i,j}P_{ij}Q_{ij}.
\label{defFrobInnerProduct}
\end{align}

We use the symbol $\mathbf{Hess}$ to denote the Euclidean Hessian operator defined for any real-valued twice differentiable function $h:\mathcal{X}\subseteq\mathbb{R}^{n}\mapsto\mathbb{R}$, as
$$\mathbf{Hess}\left(h\right)\!:=
\!\!\begin{pmatrix}
\frac{\partial^2 h}{\partial x_1^2} & \frac{\partial^2 f}{\partial x_1 \partial x_2} & \cdots & \frac{\partial^2 h}{\partial x_1 \partial x_n} \\
\frac{\partial^2 h}{\partial x_2 \partial x_1} & \frac{\partial^2 h}{\partial x_2^2} & \cdots & \frac{\partial^2 h}{\partial x_2 \partial x_n} \\
\vdots & \vdots & \ddots & \vdots \\
\frac{\partial^2 h}{\partial x_n \partial x_1} & \frac{\partial^2 h}{\partial x_n \partial x_2} & \cdots & \frac{\partial^2 h}{\partial x_n^2}
\!\end{pmatrix}\forall\bm{x}\in\mathcal{X}.$$
In general, the entries of $\mathbf{Hess}\left(h\right)$ depend on $\bm{x}$, i.e., $\mathbf{Hess}$ returns a symmetric matrix field. The $(i,j)$th entry of the operator $\mathbf{Hess}$ is $\frac{\partial^2}{\partial x_i\partial x_j}$.

Following \eqref{defFrobInnerProduct}, we define the Frobenius a.k.a. Hilbert-Schmidt inner product between the operator $\mathbf{Hess}$ and a matrix field $\bm{Q}(\bm{x})$ where $\bm{x}\in\mathcal{X}\subseteq\mathbb{R}^{n}$, as
\begin{align}
\langle\mathbf{Hess},\bm{Q}(\bm{x})\rangle := \sum_{i,j}\dfrac{\partial^2}{\partial x_i\partial x_j}Q_{ij}(\bm{x}).
\label{FrobInnerProductHessOperatorAndMatrix}
\end{align}
}}

With the assumptions {\blue{in Sec. \ref{subsecSmoothness}}}, the GSBP \eqref{GeneralizedSBP} can be rewritten as a state PDF steering problem:
\begin{subequations}
	\begin{align}		&\underset{(\rho^{\bm{u}},\bm{u})}{\inf}\:\int_{0}^{T}\int_{\mathcal{X}}\frac{1}{2}\|\bm{u}(t,\bm{x})\|_{2}^{2} \:\rho^{\bm{u}}(t,\bm{x}) \:\differential\bm{x}\: \differential t \label{SBPobjective} \\
		 \text{subject to} ~~&	\frac{\partial \rho^{\bm{u}}}{\partial t} =-\nabla_{\bm{x}}\cdot( \rho^{\bm{u}} \bm{f})+\langle \mathbf{H e s s},\bm{G}\rho^{\bm{u}}\rangle,\label{FPKequation}\\
	 &  \rho^{\bm{u}}(0,\bm{x})=\rho_0, \; \quad \rho^{\bm{u}}(T,\bm{x})=\rho_T \label{InitialAndTerminalPDF}.
	\end{align}
	\label{Optimization_FPK}
\end{subequations}
The constraint \eqref{FPKequation} is the controlled Fokker-Planck-Kolmogorov (FPK) PDE which governs the flow of the joint state PDF $\rho^{\bm{u}}$ associated with the SDE \eqref{GenSBPdyn}. {\blue{For a derivation of \eqref{FPKequation} from \eqref{GenSBPdyn}, see e.g., \cite[Prop. 3.3]{pavliotis2016stochastic}.

For the term $\langle \mathbf{H e s s},\bm{G}\rho^{\bm{u}}\rangle$ in \eqref{FPKequation}, note from \eqref{FrobInnerProductHessOperatorAndMatrix} that  
$$\langle \mathbf{H e s s},\bm{G}\rho^{\bm{u}}\rangle = \displaystyle\sum_{i,j}\dfrac{\partial^2}{\partial x_i\partial x_j}\left(G_{ij}(t,\bm{x},\bm{u}(t,\bm{x}))\rho^{\bm{u}}(t,\bm{x})\right).$$
}}

The boundary conditions \eqref{InitialAndTerminalPDF} at $t=0$ and $t=T$ involve the prescribed initial and terminal joint state PDFs $\rho_0$ and $\rho_T$, respectively.

We note that when $\bm{f}\equiv\bm{u}$, $\bm{g}$ (and hence $\bm{G}$) $\equiv\bm{I}_{n}$, then \eqref{FPKequation} reduces to the {\blue{controlled}} heat PDE, and problem \eqref{Optimization_FPK} reduces to the classical SBP, as mentioned in Sec. \ref{subsecRelSBP}. Furthermore, when $\bm{f}\equiv\bm{u}$, $\bm{g}\equiv\bm{0}$, then \eqref{FPKequation} reduces to the Liouville PDE \cite{halder2011dispersion} for integrator dynamics $\dot{\bm{x}}=\bm{u}$, and problem \eqref{Optimization_FPK} reduces to the dynamic OMT, as mentioned in Sec. \ref{subsecRelSBP}. 

{\blue{
\begin{remark}\label{Remark:formulationOptimalControl}
To better understand the correspondence between \eqref{GeneralizedSBP} and \eqref{Optimization_FPK}, notice that \eqref{SBPobjective} is simply a re-writing of \eqref{genSBPobj} by ``opening up" the expectation operator w.r.t. the controlled state probability measure $\differential\mu^{\bm{u}}(t,\bm{x})=\rho^{\bm{u}}(t,\bm{x})\differential\bm{x}$. The constraint \eqref{FPKequation} is the PDF dynamics induced by the sample path dynamics \eqref{GenSBPdyn}. Intuitively, the term $\frac{1}{2}\|\bm{u}\|_2^2\rho^{\bm{u}}\differential\bm{x}$ is a generalized kinetic energy, and the state-time integral \eqref{SBPobjective} encodes total control effort over the finite horizon $[0,T]$. The FPK PDE \eqref{FPKequation} is a continuity equation expressing the conservation of probability mass under the drift coefficient $\bm{f}$ and the diffusion coefficient $\bm{g}$ (thus the diffusion tensor $\bm{G}$).  
\end{remark}
}}


\subsection{Existence and Uniqueness of Solution}\label{SubsecExistenceUniqueness}
Under the assumptions stated already in Sec. \ref{subsecSmoothness}, the controlled PDF $\rho^{\bm{u}}$ exists for $\bm{u}\in\mathcal{U}$. For the existence-uniqueness of solution for the variational problem \eqref{Optimization_FPK}, we further assume that\\
\textbf{(A3)} the PDF $\rho^{\bm{u}}$ remains positive and continuous for all $t\in[0,T]$.\\ 
Then, following \cite[Thm. 3.2]{jamison1974reciprocal}, \cite{blaquiere1992controllability}, problem \eqref{Optimization_FPK} is guaranteed to admit a unique solution; see also \cite[Sec. 10]{wakolbinger1992}.

We next deduce the first order optimality conditions for the GSBP \eqref{Optimization_FPK} in the form of a coupled system of $m+2$ PDEs with boundary conditions, where $m$ is the number of control inputs. {\blue{With respect to the existing literature on the conditions of optimality for GSBPs, this system of PDEs for non-affine control is the most general, and is a new result.}}


\subsection{Conditions for Optimality}\label{SubsecOptimality}
We start with the Lagrangian associated with the GSBP \eqref{Optimization_FPK}:
\begin{equation}
\begin{aligned}
        \mathcal{L}(\rho^{\bm{u}}, \bm{u},& \psi):=\int_{0}^{T} \int_{\mathcal{X}}\left\{\frac{1}{2}\|\bm{u}(t,\bm{x})\|_{2}^{2} \rho^{\bm{u}}(t,\bm{x})+\psi(t,\bm{x}) \times\right.\\
        &\left(\frac{\partial \rho^{\bm{u}}}{\partial t}+\nabla_{\bm{x}} .( \rho^{\bm{u}} \bm{f})-\langle \mathbf{Hess},\bm{G}\rho^{\bm{u}}\rangle\right)\bigg\} \mathrm{d}\bm{x}\:\mathrm{d}t
\end{aligned}
\label{lagrangian}
\end{equation}
where $\psi(t,\bm{x})$ is a $C^{2}([0,T];\mathcal{X})$ Lagrange multiplier. Let 
\begin{align}
\mathcal{P}_{0T}(\mathcal{X}) := \big\{&\rho(t,\bm{x})\geq 0\mid \int_{\mathcal{X}}\rho\differential\bm{x} = 1 ,\nonumber\\
&\rho(t=0,\bm{x})=\rho_0,\quad \rho(t=T,\bm{x})=\rho_T\big\}.
\label{FeasiblePDFvaluedCurves}    
\end{align}
Performing the unconstrained minimization of the Lagrangian $\mathcal{L}$ over $\mathcal{P}_{0T}(\mathcal{X}) \times \mathcal{U}$, where $\mathcal{U}$ is given in \eqref{feasibleU}, we get the following result.

\begin{theorem}\label{theorem1}
    \textbf{(Optimal control and optimal state PDF)} \\
Let the set of feasible Markovian controls be given by \eqref{feasibleU}. Then the pair $(\rho^{\bm{u}}_{\rm{opt}}(t,\bm{x}),\bm{u}_{\rm{opt}}(t,\bm{x}))$ that solves \eqref{Optimization_FPK}, must satisfy the following system of $m+2$ coupled PDEs:
\begin{subequations}
	\begin{align}
		&\frac{\partial \psi}{\partial t}=\frac{1}{2} \|\bm{u}_{\rm{opt}}\|_{2}^{2}-\langle\nabla_{\bm{x}} \psi, \bm{f}\rangle-\langle \bm{G}, \mathbf{Hess}(\psi)\rangle, \label{HJB} \\
		 &	\frac{\partial \rho^{\bm{u}}_{\rm{opt}}}{\partial t} =-\nabla_{\bm{x}} \cdot( \rho^{\bm{u}}_{\rm{opt}} \bm{f})+\langle \mathbf{H e s s}, \bm{G}\rho^{\bm{u}}_{\rm{opt}}\rangle ,\label{FPK}\\
   &  \bm{u}_{\rm{opt}}=\nabla_{\bm{u}_{\rm{opt}}}\left(\langle\nabla_{\bm{x}} \psi,  \bm{f}\rangle+\langle \bm{G}, \mathbf{Hess}(\psi)\rangle\right),
    \label{OptimalInput}
	\end{align}
	\label{coupled_PDEs}
\end{subequations}
with boundary conditions
\begin{equation}
    \rho^{\bm{u}}_{\rm{opt}}(0,\bm{x})=\rho_{0}, \; \quad \rho^{\bm{u}}_{\rm{opt}}(T,\bm{x})=\rho_{T},
    \label{boundary_conditions}
\end{equation}
where $\psi(t,\bm{x})$ is a $C^{2}([0,T];\mathcal{X})$ value function.
\end{theorem}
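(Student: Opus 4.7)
The plan is to derive \eqref{coupled_PDEs} via calculus of variations applied to the Lagrangian $\mathcal{L}$ in \eqref{lagrangian}, viewed as a functional of the triple $(\rho^{\bm{u}},\bm{u},\psi)$, with the membership $\rho^{\bm{u}}\in\mathcal{P}_{0T}(\mathcal{X})$ fixing $\rho^{\bm{u}}(0,\cdot)=\rho_0$ and $\rho^{\bm{u}}(T,\cdot)=\rho_T$. The boundary conditions \eqref{boundary_conditions} are then immediate. The strategy is to transfer every derivative acting on $\rho^{\bm{u}}$ inside $\mathcal{L}$ onto the multiplier $\psi$ via integration by parts, so that the three first-order conditions reduce to pointwise relations in $(t,\bm{x})$. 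Admissible variations $\delta\rho^{\bm{u}}$ vanish at $t\in\{0,T\}$ (the endpoints being prescribed), and standard decay or no-flux hypotheses on $\partial\mathcal{X}$ (consistent with the well-posedness setting of Sec.~\ref{subsecSmoothness}) dispose of the spatial boundary contributions.

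Executing this, $\int_{0}^{T}\psi\,\partial_t\rho^{\bm{u}}\,\differential t$ becomes $-\int_{0}^{T}\rho^{\bm{u}}\,\partial_t\psi\,\differential t$ modulo a boundary term that is constant under admissible variations; $\int_{\mathcal{X}}\psi\,\nabla_{\bm{x}}\!\cdot\!(\rho^{\bm{u}}\bm{f})\,\differential\bm{x}$ becomes $-\int_{\mathcal{X}}\rho^{\bm{u}}\langle\nabla_{\bm{x}}\psi,\bm{f}\rangle\,\differential\bm{x}$; and a double integration by parts yields $\int_{\mathcal{X}}\psi\,\langle\mathbf{Hess},\bm{G}\rho^{\bm{u}}\rangle\,\differential\bm{x}=\int_{\mathcal{X}}\rho^{\bm{u}}\,\langle\bm{G},\mathbf{Hess}(\psi)\rangle\,\differential\bm{x}$. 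The rewritten integrand is
$$\rho^{\bm{u}}\Big\{\tfrac{1}{2}\|\bm{u}\|_{2}^{2}-\partial_{t}\psi-\langle\nabla_{\bm{x}}\psi,\bm{f}\rangle-\langle\bm{G},\mathbf{Hess}(\psi)\rangle\Big\}.$$
The first variation with respect to $\psi$ reproduces the primal constraint, i.e., the FPK equation \eqref{FPK}. The pointwise first variation with respect to $\rho^{\bm{u}}$, combined with assumption \textbf{(A3)} that $\rho^{\bm{u}}>0$, forces the curly bracket to vanish, delivering \eqref{HJB} at the optimal $\bm{u}_{\rm{opt}}$. Finally, the pointwise first variation with respect to $\bm{u}$, applying the chain rule through the $\bm{u}$-dependence of both $\bm{f}$ and $\bm{G}$ and then dividing by $\rho^{\bm{u}}>0$, yields the $m$-component relation \eqref{OptimalInput}.

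The main obstacle is precisely the non-affine dependence of $\bm{f}$ and $\bm{G}$ on $\bm{u}$. In the classical control-affine setting, the analog of \eqref{OptimalInput} can be inverted explicitly (e.g., $\bm{u}_{\rm{opt}}=\bm{B}^{\!\top}\nabla_{\bm{x}}\psi$ when $\bm{f}=\widetilde{\bm{f}}+\bm{B}\bm{u}$ and $\bm{G}$ is $\bm{u}$-free), collapsing the optimality system to two PDEs in $(\psi,\rho^{\bm{u}})$ amenable to Hopf--Cole or Feynman--Kac reductions. Here, since $\nabla_{\bm{u}}\bm{f}$ and $\nabla_{\bm{u}}\bm{G}$ are generally state- and time-dependent through the learnt networks $\mathcal{N}_{\text{Drift}}$ and $\mathcal{N}_{\text{Diffusion}}$, \eqref{OptimalInput} is an \emph{implicit} relation for $\bm{u}_{\rm{opt}}(t,\bm{x})$ that couples nonlinearly to \eqref{HJB}--\eqref{FPK}; this is exactly why the optimality system contains $m+2$ rather than two PDEs. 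A secondary technicality is rigorously justifying the interchange of variation with integration and the vanishing of boundary contributions; this I would support by invoking the $C^{2}$ smoothness of $\psi$ together with the regularity of $\bm{f},\bm{g},\rho^{\bm{u}}$ guaranteed by \textbf{(A1)}--\textbf{(A3)}.
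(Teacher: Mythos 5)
Your proposal is correct and follows essentially the same route as the paper: form the Lagrangian, integrate by parts in $t$ (discarding the endpoint term, constant over $\mathcal{P}_{0T}(\mathcal{X})\times\mathcal{U}$) and twice in $\bm{x}$ to move all derivatives onto $\psi$, then read off \eqref{OptimalInput} from stationarity in $\bm{u}$, \eqref{HJB} from the vanishing of the resulting bracket against arbitrary admissible $\rho^{\bm{u}}$, and \eqref{FPK} with \eqref{boundary_conditions} from primal feasibility. The only cosmetic difference is that the paper first minimizes over $\bm{u}$ and substitutes back before varying $\rho^{\bm{u}}$, whereas you take the two pointwise variations separately; the computations are identical.
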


\begin{proof}
{\blue{For $\mathcal{X}\subseteq\mathbb{R}^{n}$, let $r_{0}\in \overline{\mathbb{R}}:=\mathbb{R}\cup\{-\infty,+\infty\}$ (two point compactification of the real line $\mathbb{R}$) be defined as $r_{0}:=\sup_{\bm{x}\in\mathcal{X}}\|\bm{x}\|_2$.}} 

We write the Lagrangian \eqref{lagrangian} as the sum of three state-time integrals:
\begin{align}
&\int_{0}^{T}\!\!\!\!\int_{\mathcal{X}}\frac{1}{2}\|\bm{u}\|_{2}^{2}\rho^{\bm{u}}\differential \bm{x}\:\differential t + \int_{0}^{T}\!\!\!\!\int_{\mathcal{X}}\psi\frac{\partial\rho^{\bm{u}}}{\partial t} \differential \bm{x}\:\differential t\nonumber\\
&\!\!\!\!+\!\!\int_{0}^{T}\!\!\!\!\int_{\mathcal{X}}\!\!\left(\!\frac{\partial \rho^{\bm{u}}}{\partial t}+\nabla_{\bm{x}}\cdot( \rho^{\bm{u}} \bm{f})-\langle \bm{G}, \mathbf{Hess}\left(\rho^{\bm{u}}\!\right)\rangle\!\!\right)\!\psi\differential \bm{x}\:\differential t.
\label{ThreeTermLagrangian}
\end{align}
In above, for the second summand, we invoke the Fubini–Tonelli theorem to switch the order of integration and perform integration by parts w.r.t. $t$. {\blue{This gives
\begin{align}
&\!\!\int_{0}^{T}\!\!\!\!\int_{\mathcal{X}}\!\psi\frac{\partial\rho^{\bm{u}}}{\partial t} \differential \bm{x}\:\differential t\nonumber\\
=&\!\!\int_{\mathcal{X}}\!\!\left(\int_{0}^{T}\!\!\psi\frac{\partial\rho^{\bm{u}}}{\partial t}\differential t\right)\differential \bm{x}\nonumber\\
=&\!\!\int_{\mathcal{X}}\!\!\left(\left[\psi\rho^{\bm{u}}\right]_{t=0}^{t=T} - \int_{0}^{T}\frac{\partial\psi}{\partial t}\rho^{\bm{u}}\differential t\right)\differential \bm{x}\nonumber\\
=& \!\!\underbrace{\int_{\mathcal{X}}\!\!\left(\psi(T,\bm{x})\rho_{T}(\bm{x}) - \psi(0,\bm{x})\rho_{0}(\bm{x})\right)\differential\bm{x}}_{\text{constant over $\mathcal{P}_{0T}(\mathcal{X})\times\mathcal{U}$}} -\!\!\int_{0}^{T}\!\!\!\!\int_{\mathcal{X}}\frac{\partial\psi}{\partial t}\rho^{\bm{u}}\differential\bm{x}\differential t. 
\label{SecondSummandIBP}
\end{align}
}}

For the third summand {\blue{in \eqref{ThreeTermLagrangian}}}, we perform integration by parts w.r.t. $\bm{x}$, {\blue{to obtain
\begin{align}
&\!\!\int_{0}^{T}\!\!\!\!\int_{\mathcal{X}}\!\left(\frac{\partial \rho^{\bm{u}}}{\partial t}+\nabla_{\bm{x}}\cdot( \rho^{\bm{u}} \bm{f})-\langle \mathbf{Hess},\bm{G}\rho^{\bm{u}}\rangle\right) \psi\:\differential \bm{x}\:\differential t\nonumber\\
=&\!\!\int_{0}^{T}\!\!\!\!\left(\int_{\mathcal{X}}\!\!\left(\!\frac{\partial \rho^{\bm{u}}}{\partial t}\!+\!\nabla_{\bm{x}}\cdot( \rho^{\bm{u}} \bm{f})\!\right)\!\psi\:\differential \bm{x}\!-\!\langle \mathbf{Hess},\bm{G}\rho^{\bm{u}}\rangle\psi\differential \bm{x}\!\!\right)\!\differential t\nonumber\\
=&\!\!\!\int_{0}^{T}\!\!\!\left(\!\lim_{\|\bm{x}\|_2\rightarrow r_0}\left[\psi(t,\bm{x})\int \frac{\partial\rho^{\bm{u}}}{\partial t}\differential\bm{x}\right]\!\right)\!\!-\!\!\int_{0}^{T}\!\!\!\!\int_{\mathcal{X}}\!\!\langle\nabla_{\bm{x}}\psi,\bm{f}\rangle\rho^{\bm{u}}\differential\bm{x}\differential t\nonumber\\
&\quad - \int_{0}^{T}\!\!\!\!\int_{\mathcal{X}}\!\langle \mathbf{Hess},\bm{G}\rho^{\bm{u}}\rangle \psi\:\differential \bm{x}\:\differential t\nonumber\\
=& -\!\!\int_{0}^{T}\!\!\!\!\!\int_{\mathcal{X}}\!\!\langle\nabla_{\bm{x}}\psi,\bm{f}\rangle\rho^{\bm{u}}\differential\bm{x}\differential t - \!\!\int_{0}^{T}\!\!\!\!\!\int_{\mathcal{X}}\!\!\langle \mathbf{Hess},\bm{G}\rho^{\bm{u}}\rangle \psi\:\differential \bm{x}\:\differential t
\label{ThirdSummandIBP}
\end{align}
where we assumed that the limits at $\|\bm{x}\|_2\rightarrow r_0$ are zero.

Now consider the second summand in \eqref{ThirdSummandIBP}, and perform two-fold integration by parts w.r.t. $\bm{x}$ as
\begin{align}
&\int_{\mathcal{X}}\!\langle \mathbf{Hess},\bm{G}\rho^{\bm{u}}\rangle \psi\:\differential \bm{x}\nonumber\\
=&\int_{\mathcal{X}}\!\displaystyle\sum_{i,j}\dfrac{\partial^2}{\partial x_i\partial x_j}\left(G_{ij}\rho^{\bm{u}}\right)\psi\:\differential \bm{x}\nonumber\\
=&\displaystyle\sum_{i,j}\!\int_{\mathcal{X}}\!\dfrac{\partial^2}{\partial x_i\partial x_j}\left(G_{ij}\rho^{\bm{u}}\right)\psi\:\differential \bm{x}\nonumber\\ 
=&-\sum_{i,j} \int_{\mathcal{X}} \frac{\partial(G_{ij} \rho^{\bm{u}})}{\partial x_j} \frac{\partial \psi}{\partial x_i} \mathrm{~d} \boldsymbol{x} \nonumber\\
=&\sum_{i,j} \int_{\mathcal{X}}(G_{ij} \rho^{\bm{u}}) \frac{\partial^2 \psi}{\partial x_j \partial x_i} \mathrm{~d} \boldsymbol{x}\nonumber\\
=&\int_{\mathcal{X}} \sum_{i,j}(G_{ij} \rho^{\bm{u}}) \frac{\partial^2 \psi}{\partial x_j \partial x_i} \mathrm{~d} \boldsymbol{x} \nonumber\\
=&\int_{\mathcal{X}}\langle\boldsymbol{G}, \mathbf{Hess}(\psi)\rangle \rho^{\bm{u}} \differential\boldsymbol{x},
\end{align}
which helps rewrite \eqref{ThirdSummandIBP} as
\begin{align}
-\!\!\int_{0}^{T}\!\!\!\!\!\int_{\mathcal{X}}\!\!\langle\nabla_{\bm{x}}\psi,\bm{f}\rangle\rho^{\bm{u}}\differential\bm{x}\differential t - \!\!\int_{0}^{T}\!\!\!\!\!\int_{\mathcal{X}}\!\!\langle\boldsymbol{G}, \mathbf{Hess}(\psi)\rangle \rho^{\bm{u}} \differential\boldsymbol{x}\:\differential t.
\label{ThirdSummandIBPSimplified}   
\end{align}
Combining \eqref{SecondSummandIBP}, \eqref{ThirdSummandIBP}, \eqref{ThirdSummandIBPSimplified}, and dropping the constant term, the}} Lagrangian \eqref{ThreeTermLagrangian} simplifies to
\begin{align}       \int_{0}^{T}\!\!\!\!\int_{\mathcal{X}}\!\!\left(\!\frac{1}{2}\|\bm{u}\|_{2}^{2}-\frac{\partial \psi}{\partial t}-\langle\nabla_{\bm{x}} \psi, \bm{f}\rangle-\langle \bm{G}, \mathbf{Hess}(\psi)\rangle\!\!\right)\! \rho^{\bm{u}} \differential \bm{x}\:\differential t.
        \label{Integrationbyparts}
\end{align}
Minimizing \eqref{Integrationbyparts} w.r.t. $\bm{u}$ for a fixed PDF $\rho^{\bm{u}}$ yields \eqref{OptimalInput}.

We then substitute \eqref{OptimalInput} back in \eqref{Integrationbyparts}, and equate the resulting expression to zero, to arrive at the dynamic programming equation
\begin{align}
        &\int_{0}^{T}\!\int_{\mathcal{X}}\! \left(\frac{1}{2}\|\nabla_{\bm{u}_{\rm{opt}}}\left(\langle\nabla_{\bm{x}} \psi,  \bm{f}\rangle+\langle \bm{G}, \mathbf{Hess}(\psi)\rangle\right)\|_{2}^{2}\right.\nonumber\\
        & -\frac{\partial \psi}{\partial t}- \langle\nabla_{\bm{x}}\psi, \bm{f}\rangle -\left.\langle \bm{G}, \mathbf{Hess}(\psi)\rangle \right) \rho^{\bm{u}}(t,\bm{x}) \differential\bm{x}\:\differential t=0.
    \label{dynamic_programming_equation}
\end{align}
Since \eqref{dynamic_programming_equation} should be satisfied for arbitrary $\rho^{\bm{u}}$, we get
\begin{align*}
\frac{\partial \psi}{\partial t}=&\frac{1}{2} \|\nabla_{\bm{u}}\left(\langle\nabla_{\bm{x}} \psi,  \bm{f}\rangle+\langle \bm{G}, \mathbf{Hess}(\psi)\rangle\right)\|_{2}^{2}-\langle\nabla_{\bm{x}} \psi, \bm{f}\rangle\nonumber\\
&-\langle \bm{G}, \mathbf{Hess}(\psi)\rangle
\end{align*}
which is the HJB PDE \eqref{HJB}. The FPK PDE \eqref{FPK} and the boundary conditions \eqref{boundary_conditions} follow from the primal feasibility conditions \eqref{FPKequation} and \eqref{InitialAndTerminalPDF}, respectively.
\end{proof}

{\blue{\begin{table*}[t]
\centering
\renewcommand\arraystretch{2.5}
{\color{black}{\begin{tabular}{|  c  | 
 c  |  c  |  c  |  c  |  c  |} 
 \hline
 ~Special case~ & ~Drift $\bm{f}$~ & ~Diffusion $\bm{g}$~ & ~Form of \eqref{HJB}~ & ~Form of \eqref{FPK}~ & ~Form of \eqref{OptimalInput}~ \\ [1.5ex] 
 \hline\hline
 OMT \cite{benamou2000computational} & $\bm{u}$ & $\bm{0}_{n\times p}$ & $\dfrac{\partial\psi}{\partial t} + \dfrac{1}{2}\|\nabla_{\bm{x}}\psi\|_2^2 = 0$ & $\dfrac{\partial\rho_{\rm{opt}}^{\bm{u}}}{\partial t}=-\nabla_{\bm{x}}\cdot\left(\rho_{\rm{opt}}^{\bm{u}}\nabla_{\bm{x}}\psi\right)$ & $\bm{u}_{\rm{opt}}=\nabla_{\bm{x}}\psi$\\ [1.5ex]
 \hline
SBP \cite{schrodinger1931umkehrung,schrodinger1932theorie} & $\bm{u}$ & $\bm{I}_{n\times n}$ & $\dfrac{\partial\psi}{\partial t} + \dfrac{1}{2}\|\nabla_{\bm{x}}\psi\|_2^2 + \Delta_{\bm{x}}\psi = 0$ & $\dfrac{\partial\rho_{\rm{opt}}^{\bm{u}}}{\partial t}=-\nabla_{\bm{x}}\cdot\left(\rho_{\rm{opt}}^{\bm{u}}\nabla_{\bm{x}}\psi\right) + \Delta_{\bm{x}}\psi$ & $\bm{u}_{\rm{opt}}=\nabla_{\bm{x}}\psi$\\ [1.5ex]
 \hline
 Control-affine & $\widetilde{\bm{f}}(t,\bm{x})$ & $\bm{B}(t)\in\mathbb{R}^{n\times m}$ & $\dfrac{\partial\psi}{\partial t} + \dfrac{1}{2}\|\bm{B}(t)^{\!\top}\nabla_{\bm{x}}\psi\|_2^2+ \langle\nabla_{\bm{x}}\psi,\widetilde{\bm{f}}\rangle$ & $\dfrac{\partial\rho_{\rm{opt}}^{\bm{u}}}{\partial t}= -\nabla_{\boldsymbol{x}}\cdot\left(\rho^{\boldsymbol{u}}_{\mathrm{opt}}\left(\widetilde{\boldsymbol{f}}+\boldsymbol{B}(t)\boldsymbol{B}(t)^{\top} \nabla_{\boldsymbol{x}} \psi\right)\right)$ & $\bm{u}_{\rm{opt}}=\bm{B}(t)^{\!\top}\nabla_{\bm{x}}\psi$\\ [0.5ex]
 GSBP \cite{caluya2021wasserstein} & $+\bm{B}(t)\bm{u}$ & & $+ \langle\bm{B}(t)\bm{B}(t)^{\!\top},\mathbf{Hess}(\psi)\rangle = 0$ & $+\left\langle\boldsymbol{B}(t)\boldsymbol{B}(t)^{\top}, \mathbf{Hess}\left(\rho^{\boldsymbol{u}}_{\mathrm{opt}}\right)\right\rangle$ & \\[1.5ex]
 \hline
 \end{tabular}}}
 \caption{Special cases of the GSBP \eqref{GeneralizedSBP} (equivalently \eqref{Optimization_FPK}) and corresponding reductions of the optimality conditions \eqref{coupled_PDEs}.}
\label{Table:SpecialCases} 
\end{table*}}}

{\blue{
\begin{remark}\label{Remark:ConditionsOfOptimality}
The conditions of optimality \eqref{coupled_PDEs} relate the primal variables $(\rho^{\bm{u}}_{\rm{opt}}(t,\bm{x}),\bm{u}_{\rm{opt}}(t,\bm{x}))$ with the dual variable (i.e., Lagrange multiplier) $\psi(t,\bm{x})$. Specifically, the HJB PDE \eqref{HJB} and the controlled FPK PDE \eqref{FPK} express the dual and the primal feasibility, respectively. The optimal control policy equation \eqref{OptimalInput} expresses the primal-dual relation.
\end{remark}}}

Structurally, the system of coupled PDEs \eqref{coupled_PDEs} for our control non-affine GSBP is quite different from the  
corresponding system for control-affine SBPs \cite[eq. (5.7)-(5.8)]{chen2021stochastic}, \cite[eq. (20)-(21)]{caluya2021wasserstein}, \cite[eq. (4)]{caluya2021reflected}. In the control-affine SBPs, the conditions of optimality involve two coupled PDEs: one being the HJB PDE and another being the controlled FPK PDE, as in \eqref{HJB}-\eqref{FPK}. Once this pair of PDEs are solved for two unknowns $\rho_{\rm{opt}}^{\bm{u}},\psi$ using techniques such as Hopf-Cole transform followed by a contractive fixed point recursion \cite{chen2016entropic,caluya2021wasserstein} or Feynman-Kac path integrals \cite{nodozi2022schrodinger}, the optimal control $\bm{u}_{\rm{opt}}$ is obtained as a scaled (sub)gradient of $\psi$. In other words, for control-affine SBPs, $\bm{u}_{\rm{opt}}$ is an explicit functional of $\psi$. 

In contrast, the system \eqref{coupled_PDEs} for our non-affine GSBP comprises of $m+2$ coupled PDEs in three unknowns: $\rho_{\rm{opt}},\bm{u}_{\rm{opt}},\psi$, where $m$ is the number of control inputs. This is because \eqref{OptimalInput} itself gives $m$ PDEs coupled in $\psi$ and $\bm{u}_{\rm{opt}}$, while the equation pair \eqref{HJB}-\eqref{FPK} are coupled in $\rho_{\rm{opt}}^{\bm{u}},\bm{u}_{\rm{opt}},\psi$. Existing techniques such as Hopf-Cole transform or Feynman-Kac path integrals no longer apply for this situation, and new ideas are needed to numerically solve the coupled system \eqref{coupled_PDEs}-\eqref{boundary_conditions}.

{\blue{Table \ref{Table:SpecialCases} summarizes how known results in the literature can be recovered as special cases of \eqref{coupled_PDEs}.}}



\section{Solving the Conditions for Optimality \\using PINN with Sinkhorn Losses}\label{secPINN}

In this Section, we propose a new variant of the PINN \cite{raissi2019physics,lu2021deepxde} designed for numerically solving \eqref{coupled_PDEs}-\eqref{boundary_conditions}. To do so, we first introduce the 2-Wasserstein distance followed by its entropic regularization.

\begin{definition}\label{defWass}
(\textbf{2-Wasserstein distance}) The \emph{squared 2-Wasserstein distance} $W$ between a pair of probability measures $\mu_1,\mu_2$ supported respectively on $\mathcal{X}, \mathcal{Y}\subseteq \mathbb{R}^n$, is
\begin{align}
W^{2}\left(\mu_1,\mu_2\right) := \underset{\mu\in\mathcal{M}\left(\mu_1,\mu_2\right)}{\inf}\:\displaystyle\int_{\mathcal{X}\times\mathcal{Y}}\|\bm{x}-\bm{y}\|_2^2\:\differential\mu(\bm{x},\bm{y})
\label{DefWassContinuous}    
\end{align}
where $\mathcal{M}\left(\mu_1,\mu_2\right)$ is the set of joint probability measures or couplings over the product space $\mathcal{X}\times\mathcal{Y}$ having $\bm{x}$ marginal $\mu_1$, and $\bm{y}$ marginal $\mu_2$. Hereafter, we refer to \eqref{DefWassContinuous} as the squared Wasserstein distance, dropping the prefix 2.
\end{definition}
For metric properties of $W$, see e.g., \cite[Ch. 7]{villani2003topics}. Whenever $\mu_1$ and $\mu_2$ are absolutely continuous, their respective PDFs $\rho_1,\rho_2$ exist, i.e., $\differential \mu_1(\bm{x})=\rho_1(\bm{x})\differential \bm{x}$ and $\differential \mu_2(\bm{y})=\rho_2(\bm{y})\differential \bm{y}$), and we use the equivalent notation $W^{2}\left(\rho_1,\rho_2\right)$. Notice that \eqref{DefWassContinuous} corresponds to a linear program (LP) and is in fact, the Kantorovich formulation \cite{kantorovich1942transfer} of OMT.

\begin{definition}\label{defSinkhornLoss}
(\textbf{Sinkhorn loss})
The Sinkhorn loss between a pair of probability measures $\mu_1,\mu_2$ supported respectively on $\mathcal{X}, \mathcal{Y}\subseteq \mathbb{R}^n$, with fixed $\varepsilon>0$, is the entropy-regularized squared Wasserstein distance, i.e.,
\begin{align}
W_{\varepsilon}^{2}\left(\mu_1,\mu_2\right) := &\underset{\mu\in\mathcal{M}\left(\mu_1,\mu_2\right)}{\inf}\displaystyle\int_{\mathcal{X}\times\mathcal{Y}}\bigg\{\|\bm{x}-\bm{y}\|_2^2+\varepsilon\log\mu\bigg\}\nonumber\\
&\qquad\qquad\qquad\qquad\qquad\differential\mu(\bm{x},\bm{y}),
\label{DefSinkhornLoss}    
\end{align}
where $\mathcal{M}\left(\mu_1,\mu_2\right)$ is the set of joint probability measures or couplings over the product space $\mathcal{X}\times\mathcal{Y}$ having $\bm{x}$ marginal $\mu_1$, and $\bm{y}$ marginal $\mu_2$.
\end{definition}
It is known \cite{carlier2017convergence} that $W_{\varepsilon}^{2}\rightarrow W^2$ in the limit $\varepsilon\downarrow 0$. Even though the Sinkhorn loss \eqref{DefSinkhornLoss} does not define a metric over $\mathcal{M}$, its computation offers several advantages over that of \eqref{DefWassContinuous}. For instance, the entropic regularization makes the objective in \eqref{DefSinkhornLoss} strictly convex, and its discrete implementation was proposed \cite{cuturi2013sinkhorn} as a fast numerical approximant of the OMT \eqref{DefWassContinuous}. As we explain next, \eqref{DefSinkhornLoss} is also better suited for automatic differentiation w.r.t. neural network parameters for PINN training, which is what we need for our boundary conditions \eqref{genSBPconstr} (or equivalently \eqref{boundary_conditions}).

\begin{figure*}[t]
\centering
\includegraphics[width=.8\paperwidth]{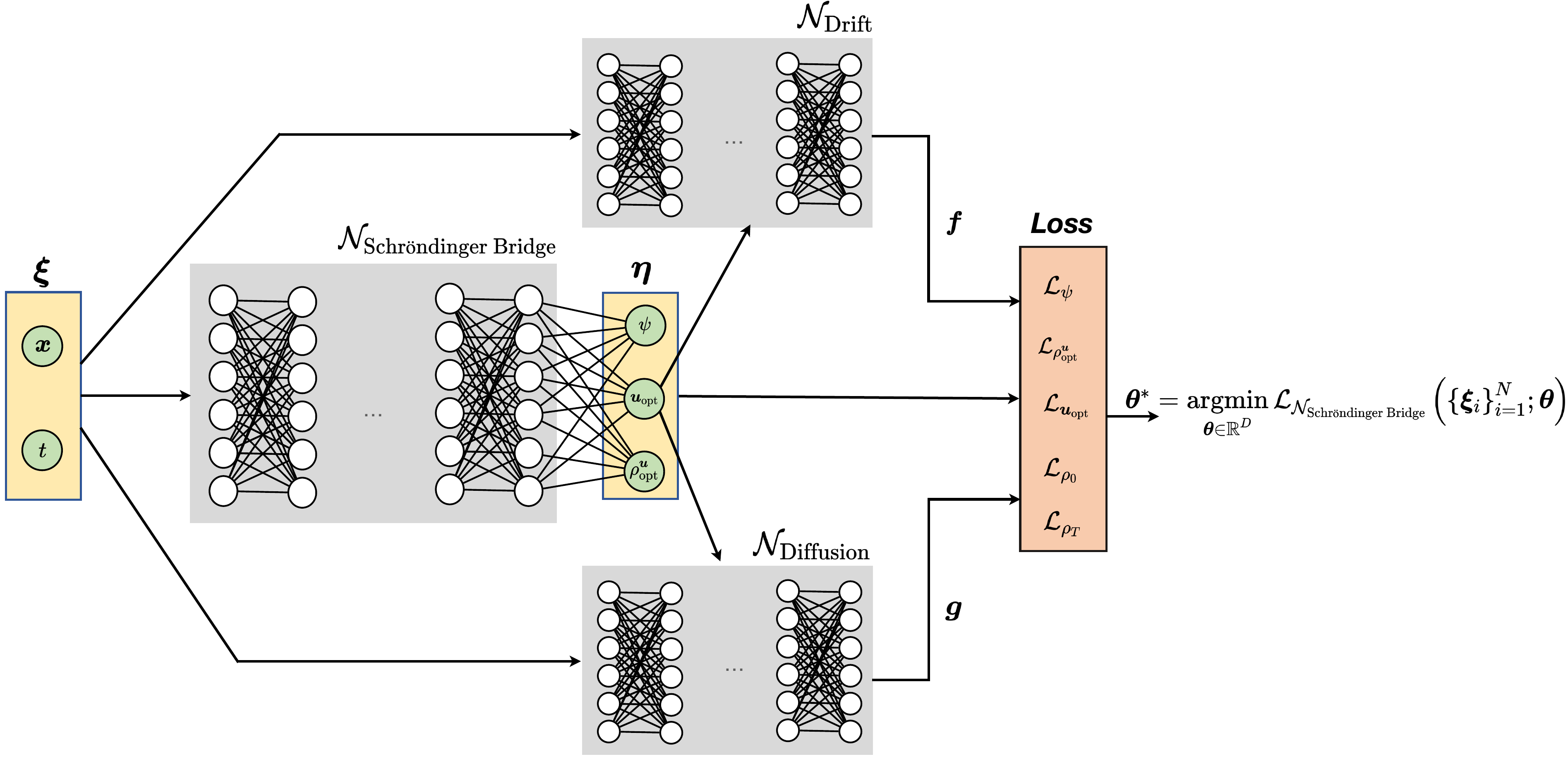}
\caption{{\small{The architecture of the physics-informed neural network with the system state $\bm{x}$, and the time $t$ as the input features $\bm{\xi}:=(\bm{x}, t)$. The network output $\bm{\eta}$ comprises of the value function, optimally controlled PDF, and optimal control policy, i.e., $\bm{\eta}:=(\psi,\rho^{\bm{u}}_{\rm{opt}}, \bm{u}_{\rm{opt}})$. The networks $\mathcal{N}_{\text {Drift}}$ and $\mathcal{N}_{\text {Diffusion}}$ are fully trained from MD simulation.}}}
\label{fig:PINNStructure}
\end{figure*}

\subsection{Learning with Sinkhorn Losses}
\label{LearningWithSinkhornLoss}
To better understand the advantage of learning with Sinkhorn losses, consider the squared {\blue{Euclidean}} distance matrix $\bm{C}\in \mathbb{R}^{ d \times d}$,
and for a given pair of $d$-dimensional probability vectors $\bm{\mu}_1,\bm{\mu}_2$, let $\Pi (\bm{\mu}_1,\bm{\mu}_2)$ denote the set of all coupling matrices, i.e.,
\begin{align}
  \Pi (\bm{\mu}_1,\bm{\mu}_2):=\bigg\{\bm{M} \in &\mathbb{R}^{ d \times d}\mid \bm{M}\geq 0\:\text{(element-wise)},\nonumber\\
  &\bm{M} \bm{1}=\bm{\mu}_1, \bm{M}^{\top} \bm{1}=\bm{\mu}_2\bigg\}.
\end{align}
The dimension $d$ here represents the number of samples involved, i.e., the dimensionality of the standard simplex in which $\bm{\mu}_1,\bm{\mu}_2$ belong to. Then the discrete version of \eqref{DefSinkhornLoss} becomes
\begin{align}     W^{2}_{\varepsilon}\left(\bm{\mu}_1,\bm{\mu}_2\right)=&\min_{\bm{M} \in \Pi (\bm{\mu}_1,\bm{\mu}_2)} \langle \bm{C} +\varepsilon \log \bm{M},\bm{M}\rangle
     \label{WassDiscEntropy}
\end{align}
where $\varepsilon>0$ is a fixed regularization parameter. The convex problem \eqref{WassDiscEntropy} can be solved using the Sinkhorn recursions \cite{sinkhorn1964relationship,sinkhorn1967diagonal} a.k.a. iterative proportional fitting procedure (IPFP) \cite{deming1940least}.
These recursions are motivated by the observation that the minimizer of \eqref{WassDiscEntropy} must be a diagonal scaling of the known matrix $\bm{\Gamma}:=\exp\left(\frac{-\bm{C}}{2\varepsilon}\right)\in\mathbb{R}^{d\times d}_{>0}$ where the exponential is element-wise, i.e.,
\begin{align}
    \bm{M}= {\rm{diag}}(\bm{v}_1)\:\bm{\Gamma}\:{\rm{diag}}(\bm{v}_2)
\end{align}
for to-be-determined $\bm{v}_1,\bm{v}_2\in\mathbb{R}^{d}$.

Starting with some initial guess, the Sinkhorn recursions alternate between updating $\bm{v}_{1}$ and $\bm{v}_{2}$ until convergence:
\begin{subequations}
\begin{align}
    &\bm{v}_{1}^{k+1} \leftarrow \bm{\mu}_1 \oslash \left( \bm{\Gamma} \bm{v}_{2}^{k}\right),\label{uUpdate}\\
    &\bm{v}_{2}^{k+1} \leftarrow \bm{\mu}_2 \oslash \left( \bm{\Gamma}^{\top} \bm{v}_{1}^{k+1}\right),\label{vUpdate}
\end{align}
\label{SinkRecursions}
\end{subequations}
for recursion index $k=0,1,\hdots$; the symbol $\oslash$ denotes the element-wise (Hadamard) division. The updates \eqref{uUpdate}-\eqref{vUpdate} can be seen as alternating Kullback-Leibler projections \cite{bregman1967relaxation,benamou2015iterative} with guaranteed linear convergence.

When $\varepsilon=0$ in \eqref{WassDiscEntropy}, we get an LP corresponding to the discrete version of \eqref{DefWassContinuous}. This LP has $d^2$ unknowns with $d^2 + 2d$ constraints, and solving the same as standard network flow problem has $\widetilde{O}\left(d^{2}\sqrt{2d}\right)$ complexity \cite{lee2014path} which is impractical for large $d$. Furthermore, using \eqref{DefWassContinuous} as the endpoint loss for training a PINN to learn the solution of \eqref{coupled_PDEs}-\eqref{boundary_conditions}, requires us to compute 
\begin{align}
{\texttt{AutoDiff}}_{\bm{\theta}}W^{2}\left(\mu_{i},\mu_{i}^{\text{epoch index}}(\bm{\theta})\right) \:\forall\:i\in\{0,T\}
\label{AutodiffWassSquared}
\end{align}
for each epoch of the training, where ${\texttt{AutoDiff}}_{\bm{\theta}}$ refers to the standard reverse mode automatic differentiation w.r.t. PINN training parameter $\bm{\theta}$. Evaluating \eqref{AutodiffWassSquared} then amounts to differentiating through a very large scale LP which is computationally challenging even for moderately large $d$.

In contrast, using \eqref{DefSinkhornLoss} as the endpoint loss for training a PINN to learn the solution of \eqref{coupled_PDEs}-\eqref{boundary_conditions}, requires us to compute 
\begin{align}
{\texttt{AutoDiff}}_{\bm{\theta}}W_{\varepsilon}^{2}\left(\mu_{i},\mu_{i}^{\text{epoch index}}(\bm{\theta})\right) \:\forall\:i\in\{0,T\}
\label{AutodiffSinkhorn}
\end{align}
for a fixed $\varepsilon>0$. Because the Sinkhorn recursions \eqref{SinkRecursions} involve a series of differentiable linear operations, it is amenable to Pytorch auto-differentiation to support backpropagation. Thus using $W_{\varepsilon}^2$ instead of $W^2$ as the endpoint distributional losses incur lesser computational overhead allowing us to train PINNs for nontrivial GSBPs. This advantage of Sinkhorn losses over Wasserstein losses, has also been pointed out in a different context in \cite{genevay2018learning}. Rigorous consistency results have appeared in \cite{pauwels2022derivatives} showing that the derivatives of the iterates from Sinkhorn recursion computed through automatic differentiation, indeed converge to the derivatives of the corresponding Sinkhorn loss.

\subsection{Proposed PINN Architecture}
\label{subsecProposedPINN}
Our proposed architecture for the PINN is shown in  Fig. \ref{fig:PINNStructure}. For {\blue{the}} GSBP {\blue{considered here}}, the state-time $\bm{\xi}:=(\bm{x}, t)$ comprises the features that are inputs to the network, and the network output $\bm{\eta}:=(\psi,\rho^{\bm{u}}_{\rm{opt}}, \bm{u}_{\rm{opt}})$ comprises of the value function, optimally controlled PDF, and optimal policy. 

The proposed PINN is a fully connected feed-forward NN with multiple hidden layers, and we parameterize its output using the network parameter $\bm{\theta} \in \mathbb{R}^{D}$, i.e.,
\begin{align}
\bm{\eta}(\bm{\xi}) \approx \mathcal{N}_{\text{Schr\"{o}dinger Bridge}}(\bm{\xi} ; \bm{\theta}),
\end{align}
where $\mathcal{N}_{\text{Schr\"{o}dinger Bridge}}(\cdot;\bm{\theta})$ denotes the NN approximant parameterized by $\bm{\theta}$. Here $D$ denotes the dimension of the parameter space, i.e., the total number of to-be-trained weight, bias and scaling parameters for the NN. For all neurons, we use the $\tanh$ activation functions.

As mentioned in Sec. \ref{SecNeuralSBP}, the explicit expressions for $\bm{f}$ and $\bm{g}$, the drift and diffusion coefficients, are not available from first-principle physics. We learn these coefficients from MD simulation data (see Sec. \ref{subsecSystemSetup}, \ref{subsecLearningDyn}). As shown in Fig. \ref{fig:PINNStructure}, the networks $\mathcal{N}_{\text {Drift}}$ and $\mathcal{N}_{\text {Diffusion}}$, represent the learnt drift $\bm{f}$ and the learnt diffusion $\bm{g}$, respectively, which are used to evaluate the loss function $\mathcal{L}_{\mathcal{N}_{\text{Schr\"{o}dinger Bridge}}}$ for the PINN.

The PINN loss function $\mathcal{L}_{\mathcal{N}_{\text{Schr\"{o}dinger Bridge}}}$ consists of the sum of the losses associated with the $m+2$ equations in \eqref{coupled_PDEs}, and the losses associated with the boundary conditions \eqref{boundary_conditions}. Specifically, let $\mathcal{L}_{\psi}$ be the MSE loss for the HJB PDE \eqref{HJB}. Likewise, let $\mathcal{L}_{\rho^{\bm{u}}_{\rm{opt}}}$ be the MSE loss for the FPK PDE \eqref{FPK}, and because the control policy has $m$ components $(u_1,\ldots,u_m)$, let $\mathcal{L}_{u_{j_{\rm{opt}}}}\left.\right|_{j=1,\ldots,m}$ be the corresponding MSE loss term for each control policy component in \eqref{OptimalInput}. 

However, the MSE losses are insufficient to capture the distributional mismatch for endpoint boundary conditions \eqref{boundary_conditions}. Per Sec. \ref{LearningWithSinkhornLoss}, we use 
the Sinkhorn losses as the boundary condition losses $\mathcal{L}_{\rho_{0}}$ and  $\mathcal{L}_{\rho_{T}}$, and differentiate through the corresponding Sinkhorn recursions. 

Thus,
\begin{align}
\mathcal{L}_{\mathcal{N}_{\text{Schr\"{o}dinger Bridge}}}:=\mathcal{L}_{\rho_{0}}+\mathcal{L}_{\rho_{T}}+\mathcal{L}_{\psi}+\mathcal{L}_{\rho^{\bm{u}}_{\rm{opt}}}+\displaystyle\sum_{j=1}^{m}\mathcal{L}_{u_{j_{\rm{opt}}}},
\label{totalloss}
\end{align} 
where each summand loss term in \eqref{totalloss} is evaluated on a set of $N$ collocation points $\{\bm{\xi}_i\}_{i=1}^{N}$ in the domain of the feature space $\Omega:=\mathcal{X}\times[0,T]$, i.e., $\{\bm{\xi}_i\}_{i=1}^{N}\subset\Omega$. For instance, the equation error losses are of the form
\begin{align*}
&\mathcal{L}_{\psi} :=\frac{1}{N} \sum_{i=1}^{N}\left(\left.\frac{\partial \psi}{\partial t}\right|_{\bm{\xi}_{i}}-\left.\frac{1}{2} \|\bm{u}_{\rm{opt}}\|_{2}^{2}\right|_{\bm{\xi}_{i}}\left.+\langle\nabla \psi, \bm{f}\rangle \right|_{\bm{\xi}_{i}}\right.\\
&\qquad \qquad\qquad\qquad\qquad\qquad\left.\left.+\langle \bm{G}, \mathbf{Hess}(\psi)\rangle \right|_{\bm{\xi}_{i}}\right)^{2}, 
\end{align*}
\begin{align*}
&\mathcal{L}_{\rho^{\bm{u}}_{\rm{opt}}} :=\frac{1}{N} \sum_{i=1}^{N}\left(\left.	\frac{\partial \rho^{\bm{u}}_{\rm{opt}}}{\partial t}\right|_{\bm{\xi}_{i}}+\left.\nabla .( \rho^{\bm{u}}_{\rm{opt}} \bm{f})\right|_{\bm{\xi}_{i}}\right.\\
&\qquad \qquad\qquad\qquad\qquad\left.\left.-\langle \mathbf{H e s s},\bm{G}\rho^{\bm{u}}_{\rm{opt}}\rangle\right|_{\bm{\xi}_{i}}\right)^{2},
\end{align*}
\begin{align*}
&\mathcal{L}_{u_{j_{\rm{opt}}}}\left.\right|_{j=1,\ldots,m} :=\frac{1}{N} \sum_{i=1}^{N}\left(	u_{j_{\rm{opt}}}\right|_{\bm{\xi}_{i}}-\left.\frac{\partial}{\partial u_{j_{\rm{opt}}}}\left(\langle\nabla_{\bm{x}} \psi,  \bm{f}\rangle \right. \right.\\
&\qquad \qquad\qquad\qquad\qquad\left.\left.\left.+\langle \bm{G}, \mathbf{Hess}(\psi)\rangle\right)\right|_{\bm{\xi}_{i}}\right)^{2},
\end{align*}
where $u^{j}_{\rm{opt}}$ denotes the $j$th component of the optimal control $\bm{u}_{\rm{opt}}$.


We implemented the Sinkhorn recursions with the log-sum-exp (LSE) technique \cite[Section 3]{viehmann2019implementation} to maintain numerical stability 
at the expense of minor memory overhead. 
We {\blue{employed}} mini-batching for sampling our PINN output, and used the same sample indices to sample from our prescribed $\rho_{0},\rho_{T}$. {\blue{The}} squared Euclidean distance matrix $\bm{C}$ mentioned in Sec. \ref{LearningWithSinkhornLoss} {\blue{was constructed}} from the output batch points.


We used the PINN software library \cite{lu2021deepxde} with a Pytorch backend to perform numerical experiments using the above loss functions. The PINN library \cite{lu2021deepxde} was not written for {\blue{Schr\"{o}dinger bridge}}-type problems, so we needed to modify it to suit our needs. One modification was to program PINN to compute loss between outputs and distributions directly and integrate the Sinkhorn iteration algorithm into the library. We also modified it to perform the mini-batching we needed.
In summary, for training the PINN, the overall loss \eqref{totalloss} {\blue{was minimized}} over $\bm{\theta}\in\mathbb{R}^{D}$ by solving
\begin{align}
\bm{\theta}^{*}=\underset{\bm{\theta} \in \mathbb{R}^{D}}{\operatorname{argmin}}\: \mathcal{L}_{\mathcal{N}_{\text{Schr\"{o}dinger Bridge}}}(\{\bm{\xi}_i\}_{i=1}^{N}; \bm{\theta}).    
\label{NNtraining}    
\end{align}
{\blue{The next section details the simulation setup and reports the numerical results.}}

\section{Numerical Case Study of Controlled Isotropic Colloidal SA in an NPT ensemble}\label{SecNumerical}

{\blue{We now present a numerical case study of a colloidal SA system where the drift coefficient $\bm{f}$ and the diffusion coefficient $\bm{g}$ are not analytically available, instead they are learnt as NN representations $\mathcal{N}_{\text {Drift}}$ and $\mathcal{N}_{\text {Diffusion}}$, from MD simulation data. Such learnt representations are nonlinear in the state $\bm{x}$ and non-affine in control $\bm{u}$. We then solve the GSBP \eqref{GeneralizedSBP} using the PINN architecture proposed in Sec. \ref{secPINN} to design a minimum effort controller steering the distribution in the order-parameter space to synthesize the body-centered cubic (BCC) crystal structure over the given time horizon. Fig. \ref{fig:BCC} shows an initial disordered structure and a final BCC structure.}}
\subsection{System Description}\label{subsecSystemSetup}
We consider the \emph{in-silico} representation of isotropic colloidal particles with identical Lennard-Jones interaction potentials within an NPT (isothermal-isobaric) ensemble. The Lennard-Jones potential is used to model particle interactions in the system and is defined as
\begin{equation} U(r) := 4\epsilon \left(\left(\frac{\sigma}{r}\right) ^ {12} - \left(\frac{\sigma}{r}\right) ^ 6\right),
\end{equation}
where $r$ denotes the particle radius, and $\epsilon$ denotes the depth of the potential energy well and thus quantifies the strength of attractive forces between particles. The symbol $\sigma$ denotes the distance at which the potential energy is nullified, thereby demarcating the intermolecular potential's shift from attraction to repulsion depending on particle size \cite[p. 234]{mcquarrie2000statistical}.

\begin{figure}[t]
\centering
\begin{subfigure}{.22\textwidth}
  \centering
  \includegraphics[width=\linewidth]{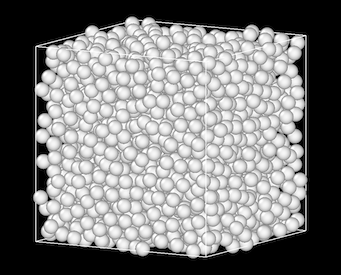}
  \caption{{\small{}}}
  \label{fig:Disorder}
\end{subfigure}%
\begin{subfigure}{.22\textwidth}
  \centering
  \includegraphics[width=\linewidth]{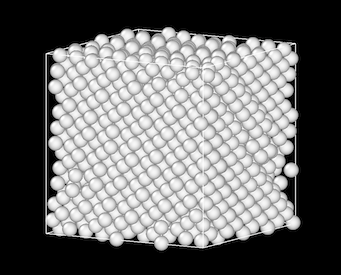}
  \caption{{\small{}}}
  \label{fig:BCC}
\end{subfigure}
\caption{{\small{{\blue{(a) An initial disordered crystalline structure. (b) A final BCC structure with minor defects. These images were generated using OVITO \cite{Ovito}.}}}}}
\label{fig:BCC}
\end{figure}
 
An ensemble of 2048 particles is initialized at a given temperature and pressure. While the positions of these particles may be considered as the most natural states of a colloidal SA system, they result in an unmanageably high-dimensional state space. To circumvent this difficulty, we seek a lower-dimensional representation. To this end, the Steinhardt bond order parameters $\langle C_{10}\rangle$ and $\langle C_{12}\rangle$ are used in this work, which are directly defined in terms of the particle positions. To calculate these parameters \cite{Steinhart1983} from the MD simulation data, we proceed through a series of steps, as discussed below. 

We first extract the positional information for each particle from the MD simulation data (see Sec. \ref{subsecLearningDyn}). Next, we identify the neighbors for each particle based on the Voronoi method \cite{rycroft2009voro++}. Using this information, we calculate the spherical harmonics, $Y_{lm}$, indexed by two quantum numbers, viz. the azimuthal or orbital quantum number, denoted by $l$, and the magnetic quantum number, denoted by $m$. 

The azimuthal quantum number defines the shape of the orbital, and for our context $l \in [1, 12]$. The magnetic quantum number represents the orientation of the orbital in space, and for our context $m \in [-l, l]$, see e.g., \cite[p. 545]{McquarriePchem}. Accordingly, the spherical harmonics are defined as
\begin{align} 
Y_{lm}(\theta, \phi) := \sqrt{\frac{{2l + 1}}{{4\pi}}  \frac{{(l - m)!}}{{(l + m)!}}}  P^m_l(\cos \theta)  e^{im\phi},
\label{defYlm}
\end{align}
where $\theta$ and $\phi$ represent the polar and azimuthal angles, respectively. In \eqref{defYlm}, the $P^m_l$ denote the associated Legendre polynomials \cite[p. 331--339]{abramowitz1968handbook}, which is a class of functions that arise in the solution to Laplace's equation in spherical coordinates.

Let $\nu(i)$ denote the number of neighbors of particle $i$, and let $r_{ij}$ signify the positional vector between particle $i$ and its neighbor $j$. Subsequently, the $l$th bond order parameter $C_{l}(i)$, for each particle $i$, is computed as \cite{Steinhart1983}
\begin{align}
 C_{l}(i) =\left(\frac{4 \pi}{2 l+1} \displaystyle\sum_{m=-l}^l\left|\frac{1}{\nu(i)} \sum_{j=1}^{\nu(i)} Y_{l m}\left(r_{ij}\right)\right|^2\right)^{\frac{1}{2}},
 \label{Cli}
\end{align}
where the index $i \in [0, \nu]$ and $\nu$ represents the total number of 
particles in the ensemble ($\nu=2048$ in our case study). Furthermore, index $j \in [0, \nu(i)]$. In \eqref{Cli}, normalizing by the number of neighbors ensures that the final system order parameter is size-independent and thus, scalable across different systems. That is, the normalization ensures that $C_{l}(i) \in [0,1]$. 

Next, the individual bond order parameters $C_{l}(i)$ are averaged over all particles in the ensemble to calculate the averaged $l$th Steinhardt bond order parameter 
\begin{equation}
\langle C_l \rangle=\frac{1}{\nu} \sum_{i=1}^{\nu} C_{l}(i),
\end{equation}
which can be used to describe the state of a colloidal SA system.\footnote{{\blue{In actual self-assembly systems, image analysis techniques can be used to locate and track particle ce
nters, as well as compute local and global order parameters in real-time (e.g., see \cite{juarez2012feedback}).}}} Therefore, the physics-based order parameters serve as a reduced-dimensionality conduit that enhances the efficiency and effectiveness of our subsequent analyses by circumventing the need to work with high-dimensional particle position data.\footnote{In this work, the Steinhart bond order parameters were calculated using the Python package Freud \cite{freud2020}.}

In this work, we specifically choose the order parameters $\langle C_{10}\rangle$ and $\langle C_{12}\rangle$ for their efficacy in distinguishing between the body-centered cubic (BCC) and the face-centered cubic (FCC) structures. The values of $\langle C_{10}\rangle$ and $\langle C_{12}\rangle$ for defect-free assembled BCC and FCC structures do not overlap, which enables differentiation between the two structure types.

In summary, the controlled dynamics of our colloidal SA system is described by the SDE~\eqref{GenSBPdyn}, where the state and inputs are defined as  
\begin{align*}
\bm{x}&:=(\langle C_{10}\rangle,\langle C_{12}\rangle)\in \mathcal{X}\equiv [0,1]^2,\\
\bm{u}&:=(\text{temperature}, \text{pressure})\in\mathcal{U}.
\end{align*}
We denote the components of the optimal control policy $\bm{u}_{\rm{opt}}$ as $u_{1}^{\rm{opt}}, u_{2}^{\rm{opt}}$ respectively.

\subsection{Learning $\bm{f}$ and $\bm{g}$}\label{subsecLearningDyn}

\begin{figure}[t]
\centering
\includegraphics[width=.4\paperwidth]{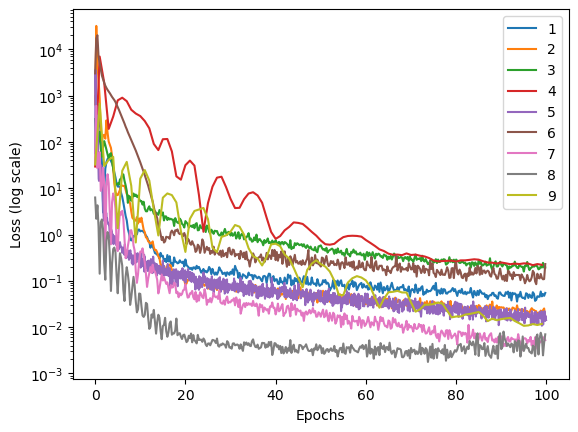}
\caption{{\small{Validation losses for nine different neural network models $\mathcal{N}_{\text {Drift}}$ and $\mathcal{N}_{\text {Diffusion}}$, for the drift and diffusion terms in the SDE~\eqref{GenSBPdyn}, with legend numbers corresponding to model numbers in Table \ref{tab:model_comparison}.}}}
\label{fig:validation_loss}
\end{figure}

To learn the NN representations $\mathcal{N}_{\text {Drift}}$ and $\mathcal{N}_{\text {Diffusion}}$, which model the drift and diffusion coefficients  $\boldsymbol{f}$ and $\boldsymbol{g}$ in the SDE~\eqref{GenSBPdyn}, we performed MD simulations for the above-described system using the Python package HOOMD-blue \cite{ANDERSON2020BLUE} with final time $T=200$ s. The simulation data consisted of 200 state trajectories, i.e., the trajectories of the order parameters $\langle C_{10}\rangle,\langle C_{12}\rangle$ for $t\in[0,T]$, that represent the time evolution of position of the $\nu=2048$ particles of the colloidal SA system, mentioned earlier in Sec. \ref{subsecSystemSetup}. Each state trajectory was generated under different linear temperature and pressure ramp rates (i.e., $\bm{u}$), which were sampled using a Latin Hypercube design and scaled to $[-0.005, 0.005]$, the input range for the simulation.  To generate the training and test data for learning the NN models, the state trajectories were sampled 500 times. 

Building on our earlier work \cite{O_Leary_2022}, $\mathcal{N}_{\text {Drift}}$ and $\mathcal{N}_{\text {Diffusion}}$ were trained on the MD data with a controlled neural SDE. The NNs are designed to be functions of the current time $t\in[0,T]$, the system state $\bm{x}$, and the control input $\bm{u}$. The networks are passed to \eqref{GenSBPdyn} to predict the state evolution. 
The MSE loss is computed for each time step, and the learning process aims to minimize the total MSE loss between the networks' predicted states and the actual observed states from the MD simulation trajectories. For model optimization, we used the Adam optimizer \cite{kingma2014adam} which adjusts the learning rate on a per-parameter basis. The learning rate was initially set to a predefined constant, as per Table \ref{tab:model_comparison}, and was subsequently adjusted using an exponential learning rate scheduler with a decay rate of 0.999. This scheduler reduces the learning rate multiplicatively after each epoch. The data was partitioned into a 70/20/10 distribution for the training, testing, and validation subsets, respectively. The implementation of these networks was done with the torchsde \cite{li2020scalable} Python package.

To determine the best architecture for the NNs $\mathcal{N}_{\text {Drift}}$ and $\mathcal{N}_{\text {Diffusion}}$, we used hyperparameter (depth and width, batch size, learning rate) turning as detailed in Table \ref{tab:model_comparison}; a total of nine models were trained and evaluated. All of the NN architectures follow a sequential design of fully connected layers, with 5 input units and an output layer of 2 units. The architectures vary in the number of hidden layers and their nodes, all using $\tanh$ activation functions. Architecture 1 employs one hidden layer with 200 nodes; architecture 2 utilizes a hidden layer of 1000 nodes; and architecture 3 deploys six hidden layers with 200 nodes each. The batch size, defining the number of samples to be processed before updating the model, is tuned for learning. Lastly, we adjust the learning rate, a factor determining how much the model's parameters should be adjusted with respect to the calculated error, for balanced and steady learning without risking instability or slow convergence. The MSE was used as the loss function for all models.

Fig. \ref{fig:validation_loss} shows the validation MSE loss for all models, which are evaluated by using the $\mathcal{N}_{\text {Drift}}$ and $\mathcal{N}_{\text {Diffusion}}$ in \eqref{GenSBPdyn} to predict the state $\bm{x}$, and then comparing the predicted states with those obtained from MD simulations. These validation results demonstrate that all models converge, indicating that the training time was sufficient.  
As seen in Table \ref{tab:model_comparison} and Fig. \ref{fig:validation_loss}, model 7 exhibited the best performance evidenced by its minimal validation loss. Consequently, we used model 7 for representing the colloidal SA dynamics in the form of \eqref{GenSBPdyn}. Its corresponding $\mathcal{N}_{\text {Drift}}$ and $\mathcal{N}_{\text {Diffusion}}$ are used for the optimal control synthesis for the GSBP. {\blue{On an NVIDIA GTX 1080, each model undergoes training that, on average, takes 10 seconds per training step. To complete 100 epochs, this process requires approximately 1.2 hours per model. The approximate inference time for the model is 0.0123 seconds.}}

\begin{table}[t!]
\centering
\begin{tabular}{| c | c | c | c | c | c |}
\hline
\begin{tabular}[c]{@{}l@{}}Model \\ number\end{tabular} & \begin{tabular}[c]{@{}c@{}}Learning \\rate\end{tabular} & \begin{tabular}[c]{@{}c@{}}Batch size\\  (of epoch)\end{tabular} & \begin{tabular}[c]{@{}c@{}}Model \\ architecture\end{tabular} & \begin{tabular}[c]{@{}c@{}}Validation \\ loss\end{tabular} & \begin{tabular}[c]{@{}c@{}}Training \\ loss\end{tabular} \\ \hline
$1$                                                     & $10^{-3}$                                                     & $1/4$                                                              & $1$                                                             & $0.390$                                                    & $0.057$                                                     \\ \hline
$2$                                                       & $10^{-2}$                                                     & $1/4$                                                              & $1$                                                             & $0.460$                                                      & $0.024$                                                     \\ \hline
$3$                                                       & $10^{-4}$                                                     & $1/4$                                                              & $1$                                                             & $1.260$                                                       & $0.120$                                                      \\ \hline
$4$                                                       & $10^{-3}$                                                     & $1$                                                                & $1$                                                             & $3.890$                                                       & $0.200$                                                   \\ \hline
$5$                                                       & $10^{-3}$                                                     & $1/4$                                                             & $1$                                                             & $0.200$                                                       & $0.016$                                                     \\ \hline
$6$                                                       & $10^{-3}$                                                     & $1/4$                                                              & $2$                                                             & $9.290$                                                       & $0.072$                                                   \\ \hline
$7$                                                       & $10^{-3}$                                                     & $1/4$                                                              & $3$                                                             & $0.030$                                                       & $0.003$                                                    \\ \hline
$8$                                                       & $10^{-4}$                                                     & $1/4$                                                              & $3$                                                             & $0.031$                                                      & $0.007$                                                     \\ \hline
$9$                                                       & $10^{-3}$                                                     & $1$                                                                & $3$                                                             & $0.110$                                                      & $0.015$                                                     \\ \hline
\end{tabular}
\caption{Comparison of different model architectures and hyperparameters for learning the NN representations $\mathcal{N}_{\text {Drift}}$ and $\mathcal{N}_{\text {Diffusion}}$ for the drift $\bm{f}$ and diffusion $\bm{g}$, respectively. The different architectures vary in the number of hidden layers and their nodes, all using $\tanh$ activation function. Architecture 1 employs one hidden layer with 200 nodes, architecture 2 utilizes a hidden layer of 1000 nodes, and architecture 3 deploys six hidden layers with 200 nodes each.}
\label{tab:model_comparison}
\end{table}


\begin{figure*}[htbp!]
\centering
\includegraphics[width=.9\linewidth]{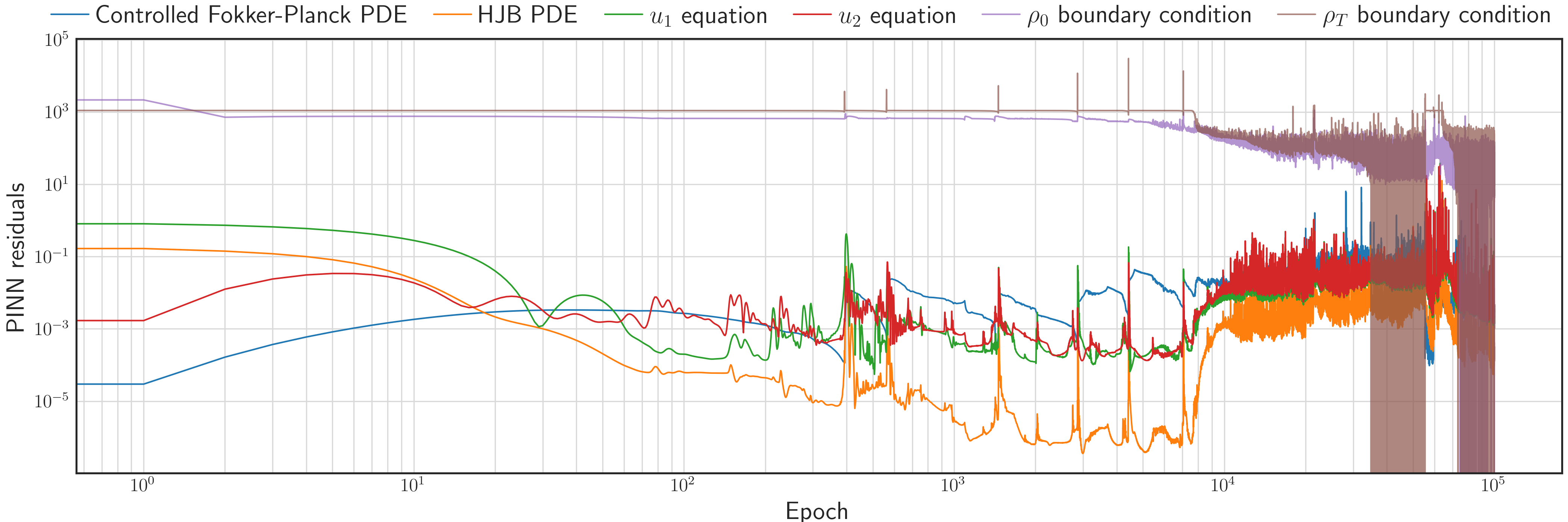}
\caption{{\small{The PINN residuals in solving the conditions of optimality \eqref{coupled_PDEs}-\eqref{boundary_conditions} for the simulation in Sec. \ref{subsecPINNnumerical}.}}}
\label{fig:PINNResiduals}
\end{figure*}

\subsection{Controller Synthesis}\label{subsecPINNnumerical}

\begin{figure*}[htbp!]
    \centering
      \begin{subfigure}{0.8\paperwidth}
        \includegraphics[width=0.8\paperwidth]{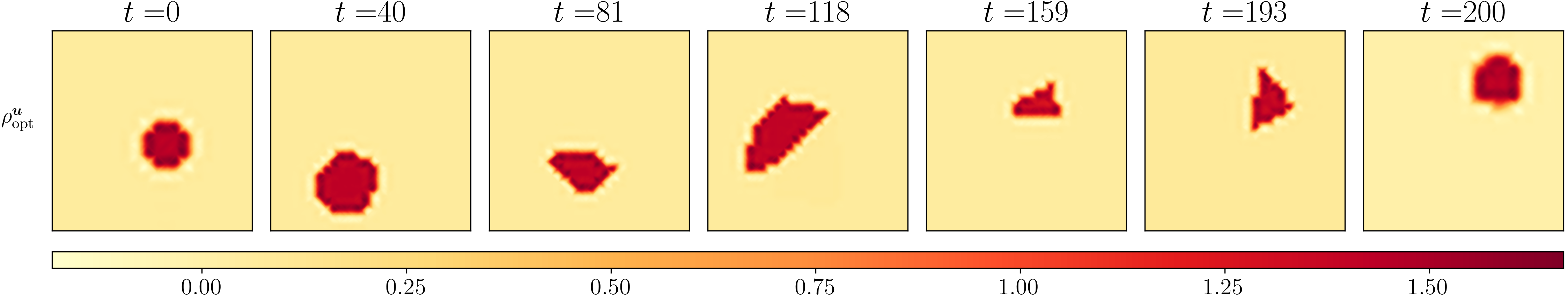}
          \caption{\small{Contour plots of the optimally controlled state PDFs $\rho^{\bm{u}}_{\rm{opt}}(t,\bm{x})$ over the state space $[0,1]^2$.}}
          \label{fig:opt_rho_opt}
      \end{subfigure}
        \hfill
      \begin{subfigure}{0.8\paperwidth}
        \includegraphics[width=0.8\paperwidth]{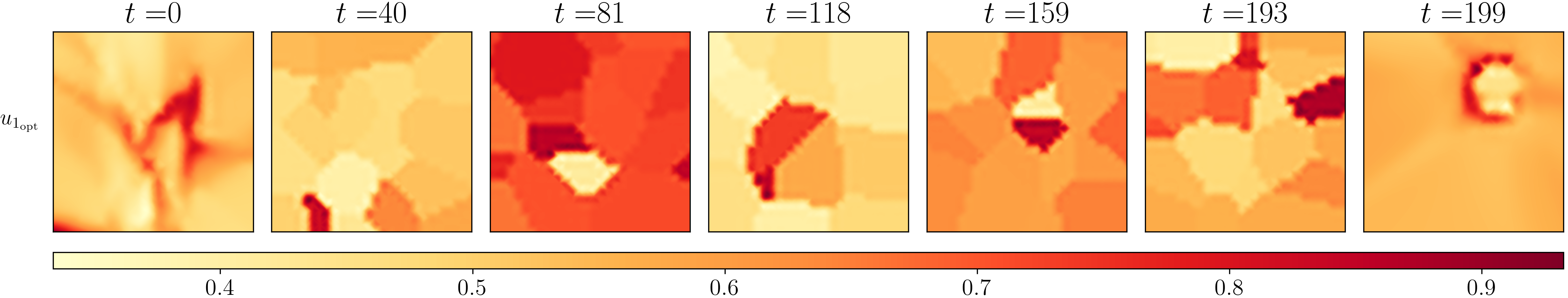}
          \caption{\small{Contour plots of the optimal control component $u_{1_{\rm{opt}}}(t,\bm{x})$ over the state space $[0,1]^2$.}}
          \label{fig:u_1}
      \end{subfigure}
      \hfill
      \begin{subfigure}{0.8\paperwidth}
        \includegraphics[width=0.8\paperwidth]{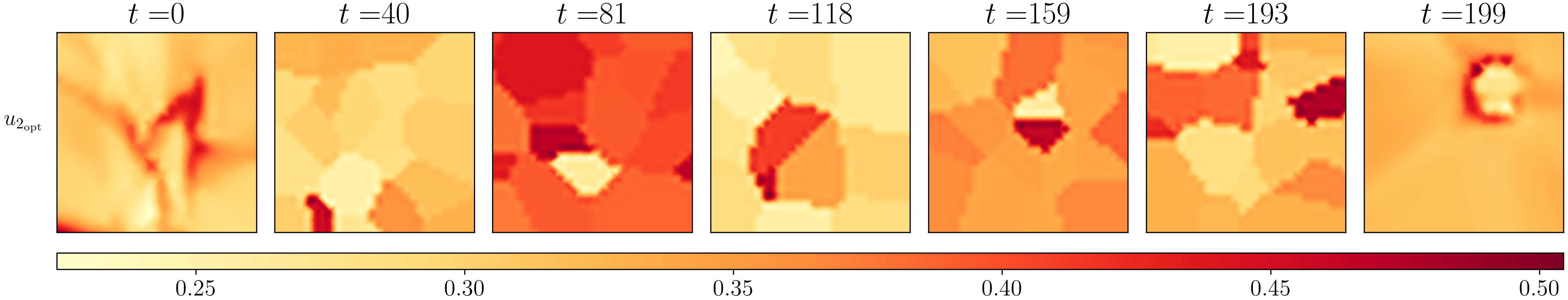}
          \caption{\small{Contour plots of the optimal control component $u_{2_{\rm{opt}}}(t,\bm{x})$ over the state space $[0,1]^2$.}}
          \label{fig:u_2}
      \end{subfigure}
            \hfill
      \begin{subfigure}{0.8\paperwidth}
        \includegraphics[width=0.8\paperwidth]{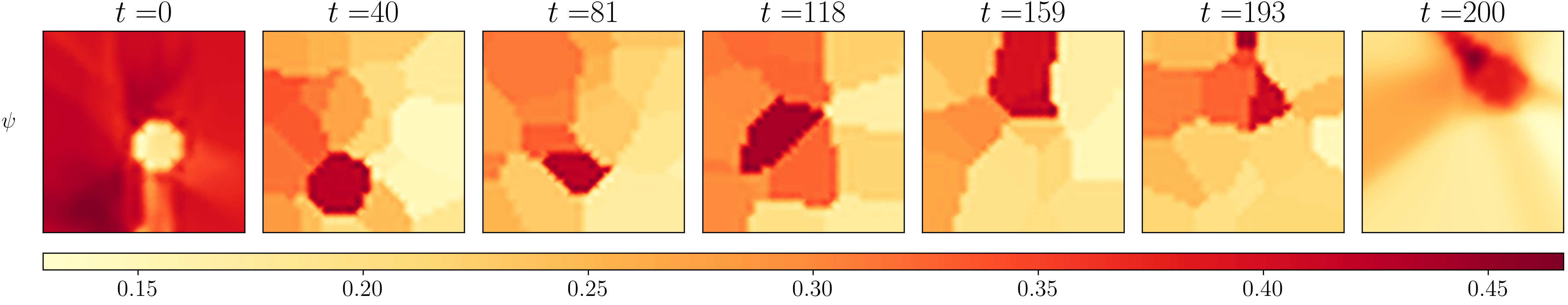}
          \caption{\small{Contour plots of the value function $\psi\left(t,\bm{x}\right)$ over the state space $[0,1]^2$.}}
          \label{fig:ValueFunction}
      \end{subfigure}
\caption{%
{\small{Results for the GSBP simulation detailed in Sec. \ref{subsecPINNnumerical} over time $t\in[0,200]$. The color denotes the value of the plotted variable; see colorbar (dark red = high, light yellow = low).}}}
\label{fig:1st_order}
\vspace*{-0.15in}
\end{figure*}
\begin{figure*}[t!]
    \centering
   \begin{subfigure}[t]{0.48\textwidth}
        \centering
        \includegraphics[height=2.8in]{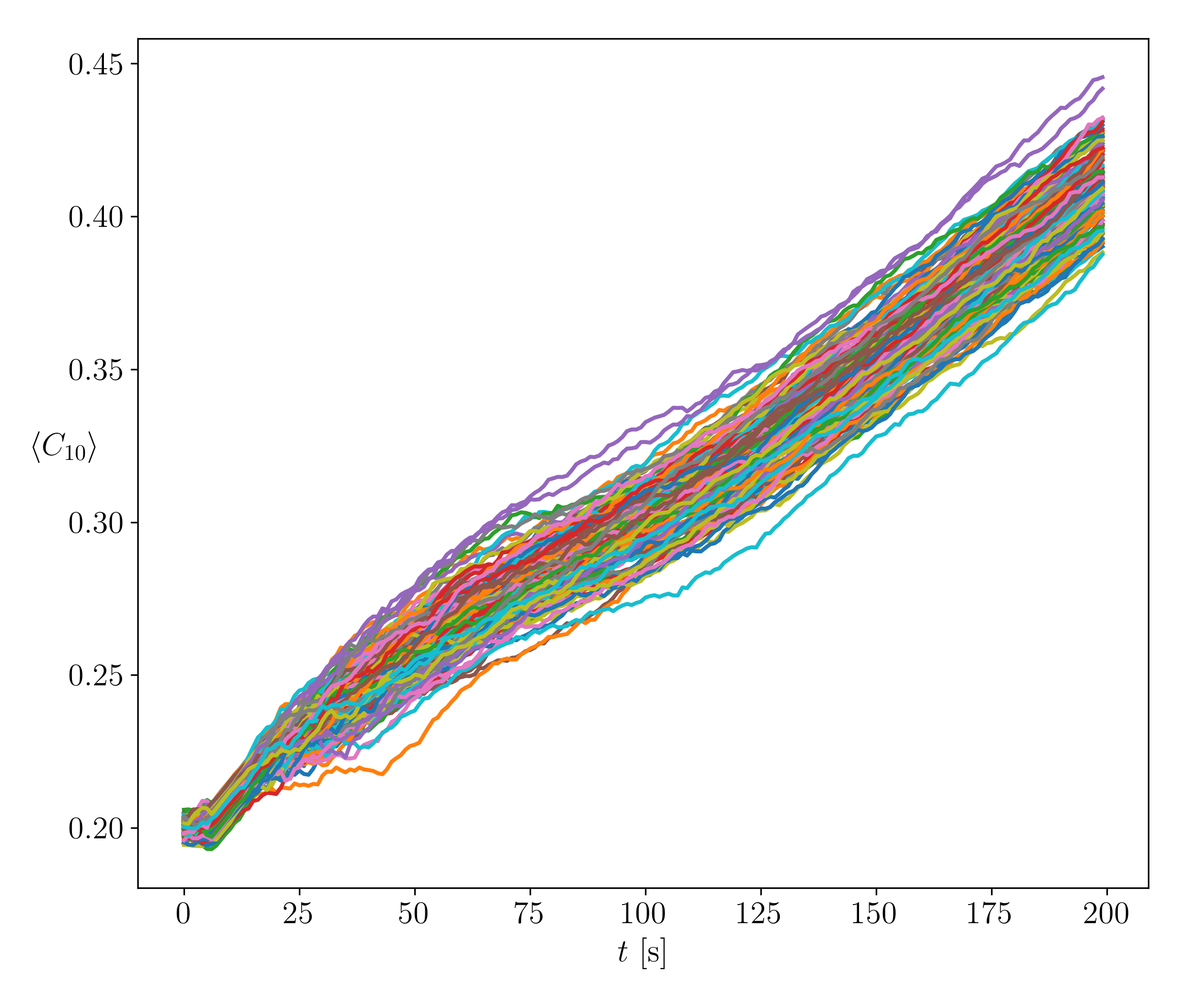}
       \caption{{\small{Optimally controlled $\langle C_{10}\rangle$ state trajectories.}}}
       \label{fig:OptimalClosed-loopStateTrajectories}
    \end{subfigure}%
    ~ 
    \begin{subfigure}[t]{0.48\textwidth}
        \centering
        \includegraphics[height=2.8in]{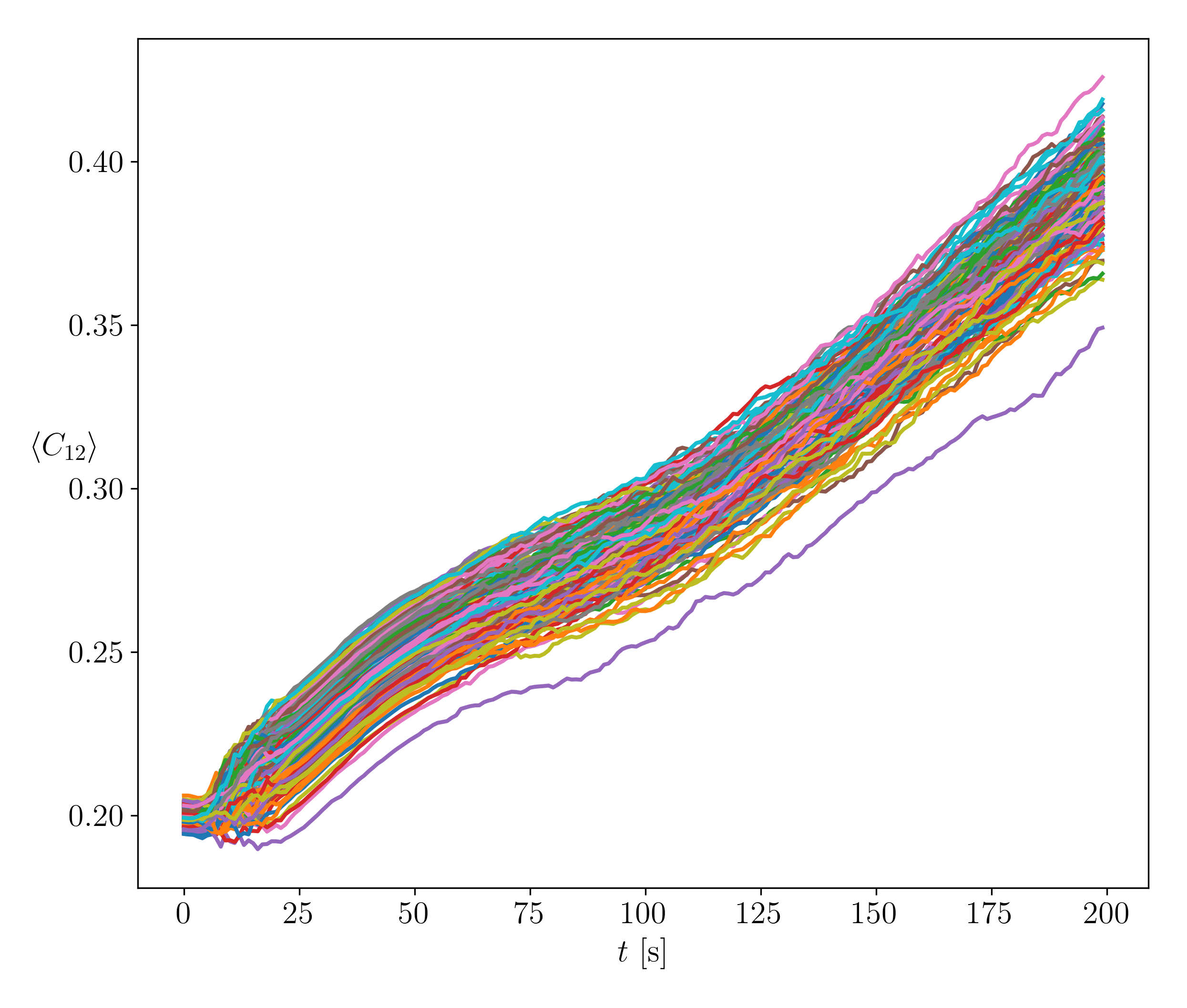}
       \caption{{\small{Optimally controlled $\langle C_{12}\rangle$ state trajectories.}}}
       \label{fig:OptimalControlTrajectories}
    \end{subfigure}
    \caption{{\small{The 150 random sample paths resulting from closed-loop simulations using the learnt optimal policy $\bm{u}_{\rm{opt}}\left(t,\langle C_{10}\rangle,\langle C_{12}\rangle\right)$.}}}
    \label{fig:SamplePath}
\end{figure*}

With the $\bm{f},\bm{g}$ learnt as per Sec. \ref{subsecLearningDyn} for the colloidal SA system described in Sec. \ref{subsecSystemSetup}, we considered the GSBP \eqref{Optimization_FPK} over fixed time horizon $[0,T]$, where the final time $T=200$ s, the initial state $\bm{x}(t=0) \sim \rho_0=\mathcal{N}\left(\bm{m}_0, \bm{\Sigma}_0\right)$, and the terminal state $\bm{x}(t=T) \sim \rho_T= \mathcal{N}\left(\bm{m}_T, \bm{\Sigma}_T\right)$. Here, the notation $\mathcal{N}\left(\bm{m},\bm{\Sigma}\right)$ stands for a joint Gaussian distribution with mean vector $\bm{m}$ and covariance matrix $\bm{\Sigma}$. We used
\begin{align}
\bm{m}_0=(0.2,0.2)^{\top},~ \bm{m}_T=(0.4,0.375)^{\top},~\bm{\Sigma}_0=\bm{\Sigma}_{T}=0.1\bm{I}_{2}.
\label{EndpointMeanCov}
\end{align}
In particular, the statistics of the initial state $\bm{x}(t=0) \sim\mathcal{N}\left(\bm{m}_0,\bm{\Sigma}_0\right)$ is chosen to coincide with that used in the MD simulation in Sec. \ref{subsecLearningDyn}. The mean $\bm{m}_T$ for the target terminal state $\bm{x}(t=T) \sim\mathcal{N}\left(\bm{m}_T,\bm{\Sigma}_T\right)$ was chosen to represent the BCC crystal structure. Hence, the control objective for the GSBP represents the problem of designing a minimum effort Markovian controller that provably steers the stochastic order parameters in a way to synthesize BCC crystal structure over the prescribed time horizon.

We used the PINN $\mathcal{N}_{\text{Schr\"{o}dinger Bridge}}$ proposed in Sec. \ref{subsecProposedPINN} for numerically solving the GSBP conditions of optimality \eqref{coupled_PDEs}-\eqref{boundary_conditions}. For our PINN implementation, the domain for state-time collocation is $\Omega= [0,1]^{2}\times [0,200]$. Our network consisted of $4$ hidden layers, each containing $70$ neurons, all with $\tanh$ activation functions. We trained the PINN for $100,000$ epochs using the Adam optimizer \cite{kingma2014adam} with a learning rate of $10^{-3}$. All our training were performed on a computing platform with NVIDIA Quadro p1000, $640$ Cuda cores, and $64$ GB RAM. For the collocation, we used $N=3000$ pseudorandom samples, drawn using Sobol distribution, between the endpoint boundary conditions at $t=0$ and $t=200$. We also uniformly randomly sampled $3,000$ samples every $20,000$ epochs to satisfy compute constraints. For computing the Sinkhorn losses at the endpoint boundary conditions, we used an entropic regularization parameter of $\varepsilon=0.1$ as in \eqref{WassDiscEntropy}. {\blue{For the computing platform mentioned above, training the proposed PINN on average takes $2$ seconds per epoch, so to complete $100,000$ epochs, it takes approximately $55.5$ hours.}}

Fig. \ref{fig:PINNResiduals} shows the PINN residuals in \eqref{totalloss}, and Fig. \ref{fig:1st_order} shows the corresponding GSBP solutions obtained from the trained PINN. In particular, Fig. \ref{fig:opt_rho_opt} shows the evolution of the optimally controlled transient joint PDFs $\rho^{\bm{u}}_{\text{opt}}(t,\bm{x})$ interpolating the fixed $\rho_{0},\rho_{T}$ mentioned above. Notice that, even though the initial and terminal stochastic states are both chosen to have Gaussian statistics, the transient joints in Fig. \ref{fig:opt_rho_opt} are non-Gaussian. This is expected since the learnt $\bm{f},\bm{g}$, as well as the optimal controller $\bm{u}_{\rm{opt}}$ (see Fig. \ref{fig:u_1}-\ref{fig:u_2}), are nonlinear in state. A comparison of Fig. \ref{fig:u_1} and Fig. \ref{fig:u_2} with Fig. \ref{fig:ValueFunction} also shows that the optimal controls are high (resp. low) in regions where the value function $\psi$ changes rapidly, i.e., when the (sub)gradient of $\psi$ is large (resp. small).

To further illustrate the GSBP results, we
performed a closed loop sample path simulation for 150 initial state samples $\bm{x}(t=0)\sim\rho_0$ (with the same $\rho_0$ mentioned before) using the learnt optimal control policy $\bm{u}_{\rm{opt}}\left(t,\langle C_{10}\rangle,\langle C_{12}\rangle\right)$ that provably steers the given $\rho_0$ from $t=0$ to the given $\rho_T$ (BCC crystal) at $t=T=200$ s. The corresponding closed-loop state sample paths shown in Fig. \ref{fig:SamplePath} demonstrate that the optimal policy indeed steers the controlled stochastic state from around $(0.2,0.2)$ to around $(0.4,0.375)$ with high probability, as specified per problem data \eqref{EndpointMeanCov}. 

{\blue{For the closed-loop simulations, we constructed a \emph{k}-d tree \cite{bentley1975multidimensional} (with leaf size = 2) for fast querying of the PINN-trained optimal control policy $\bm{u}_{\rm{opt}}\left(t,\langle C_{10}\rangle,\langle C_{12}\rangle\right)$. This construction takes $1.785$ seconds. During the numerical integration of the SDE, querying the optimal control policy takes $0.227$ milliseconds. Without the \emph{k}-d tree construction, this querying is $1000$x slower (approximately $0.22$ seconds). With \emph{k}-d tree-based querying, to simulate a closed-loop sample path as in Fig. \ref{fig:SamplePath} using the Euler-Maruyama scheme with $500$ equispaced time steps in $[0,T]$, taking approximately $177$ seconds. These experiments suggest that the proposed control approach is practically viable for colloidal SA.}}



\section{Conclusions}\label{SecConclusions}
The work presented here proposes `neural Schr\"{o}dinger bridge' -- a novel neural network-based learning and control framework for solving the generalized Schr\"{o}dinger bridge problem (GSBP), which is a fixed time horizon stochastic optimal control problem with constraints on endpoint distributions and controlled SDEs. Our work is motivated by the problem of minimum effort controlled colloidal self-assembly (SA), where the controlled dynamics is usually not available from first principle physics and instead learnt from the MD simulation data.

Our contributions go beyond connecting the colloidal SA problem with the GSBP. Since the drift and diffusions obtained from data-driven learning of the controlled neural SDEs are nonlinear in state, non-affine in control, and have explicit time-dependence, the standard computational techniques for solving GSBPs available in the literature no longer apply in this setting. In fact, we show that the conditions of optimality for such GSBPs are very different from those studied in the literature in that here we are led to solve a system of $m+2$ coupled PDEs with two boundary conditions, where $m$ is the number of control inputs. {\blue{This system of PDEs we derive is new, and is of independent interest in the theory of Schr\"{o}dinger bridge and related stochastic optimal control problems.}} To numerically solve this non-standard system, we propose a custom variant of PINN, and demonstrate its effectiveness on a data-driven colloidal SA case study. Our results should be of broad interest to control and machine learning researchers using diffusion models for learning and control.




\bibliographystyle{IEEEtran} 
\bibliography{TCST_SBP_SA.bib}

\end{document}